\DeclareMathOperator{\coker}{coker}
 \DeclareMathOperator{\HH}{H}
\DeclareMathOperator{\Hom}{Hom}
 \DeclareMathOperator{\Ext}{Ext}
\DeclareMathOperator{\Pic}{Pic}
\newtheorem{theorem}{Theorem}[section]
\newtheorem{lemma}[theorem]{Lemma}
\newtheorem{proposition}[theorem]{Proposition}
\newtheorem{remark}[theorem]{Remark}
\newtheorem{conjecture}[theorem]{Conjecture}
\newcommand{\pp}{{\mathbb P}}
\newcommand\sE{{\mathcal E}}
\newcommand\sF{{\mathcal F}}
\newcommand\sH{{\mathcal H}}
\newcommand\sI{{\mathcal I}}
\newcommand\sL{{\mathcal L}}
\newcommand\sN{{\mathcal N}}
\newcommand\sO{{\mathcal O}}
\def\f{\Gamma}
\def\PP{{\mathbb P}}
\def\ZZ{{\mathbb Z}}
\def\PP{{\mathbb P}}
\newcommand{\proj}[1]
{ \mathchoice
           { {\mathbb P}^{#1} }
           { {\mathbb P}^{#1} }
           { {\mathbb P}^{#1} }
           { {\mathbb P}^{#1} }
         }
\begin{document}
\title{The Hilbert scheme of space curves sitting on \\  a 
  smooth surface containing a line}

\author{Jan O. Kleppe}

\date{}
\maketitle
\vspace*{-0.4in}
\begin{abstract}
  \noindent We continue the study of maximal families $W$ of the Hilbert
  scheme, $ \HH(d,g)_{sc}$, of smooth connected space curves whose general
  curve $C$ lies on a smooth degree-$s$ surface $S$ containing a
  line. 
  For $s \ge 4$, we extend the two
  ranges 
  where $W$ is a unique irreducible (resp.\; generically smooth) component
  of $ \HH(d,g)_{sc}$. In another range, close to the boarder of the nef cone,
  we describe for $s=4$ and 5 components $W$ that are non-reduced, leaving
  open the non-reducedness of only $3$ (resp. $2$) families for $s \ge 6$
  (resp. $s=5$), thus making progress to recent results of Kleppe and Ottem in
  \cite{KleOtt}. 
  For $s=3$ we slightly extend previous results on a conjecture of non-reduced
  components, and in addition we show its existence in a subrange of the
  conjectured range.
  \\[-2.5mm]

  \noindent {\bf AMS Subject Classification.} 14C05 (Primary), 14C20, 14K30,
  14J28, 14H50 (Secondary).


  \noindent {\bf Keywords}. Space curves, Quartic surfaces, Cubic surfaces,
  Hilbert scheme, Hilbert-flag scheme, relative Picard scheme
 \end{abstract}
 \vspace*{-0.25in}
\thispagestyle{empty}

\section{Introduction} 

Let $\HH(d,g)_{sc}$ be the Hilbert scheme of smooth connected curves in
$\proj{3}$. In this paper we study irreducible, possibly non-reduced,
components of $ \HH(d,g)_{sc}$ whose general curve sits on a smooth
surface containing a line. 
Since the first example of a non-reduced component was found by Mumford
\cite{Mu} there has been many geometers who were challenged by this phenomena
and quite a lot of papers has appeared that consider the problem of
non-reducedness and related questions, see e.g. \cite{DanMu,DP, E, F, GP2},
\cite{K1, K2,K3,K4, Krao}, \cite{MDP1,MDP3}, \cite{MuNa, N, Nanew} and
the books \cite{L1} and \cite{H2}.

A classical way of analyzing whether the closure of a family is a component,
possibly non-reduced, is to take a general curve $C$ of a
family 
and describe the curves in an open neighborhood of $(C) \in \HH(d,g)_{sc}$.
More recently several authors has been able to sufficiently describe the
obstruction of deforming $C$ and conclude similarly \cite{F,MDP3,Krao,N, MuNa,
  Nanew}. In the recent paper \cite{KleOtt} we find non-reduced, as well as
generically smooth, irreducible components of $ \HH(d,g)_{sc}$, and we prove
non-reducedness along the classical line. This works quite well if the genus
is large 
and the minimal degree $s(C)$ of a surface containing a general curve $C$ is
small (as in \cite{GP2}).

The families we consider are $s$-maximal families. To define them let
$W$ is an irreducible closed subset of $ \HH(d,g)_{sc}$ and let $s(W):=s(C)$
where $C$ is a general curve of $W$. As in \cite{K1} we say $W$ is {\em
  $s(W)$-maximal} if it is maximal with respect to $s(W)$, i.e. $s(V) > s(W)$
for any closed irreducible subset $V$ properly containing $W$. If $d >s^2$, an
$s$-maximal family containing $(C)$ is nothing but the image under the
forgetful morphism $pr_1:{\rm D}(d,g;s)_{sc} \to {\rm H}(d,g)_{sc}$, $(C,S)
\mapsto (C)$ of an irreducible component of the Hilbert-flag scheme $ {\rm
  D}(d,g;s)_{sc}$ containing $(C,S)$, see Section 2 for details.

An $s$-maximal family $W$ needs not be a component of $
\HH(d,g)_{sc}$. Indeed $4d \le \dim W$ is obviously a necessary condition for
$W$ to be a component, while $H^1(\sI_C(s))= 0$, $\sI_C$ the sheaf ideal of $C
\subset \PP^3$, turns out to be a sufficient condition because $pr_1$ is
smooth at $(C,S)$. When $H^1(\sI_C(s))= 0$, $W$ is even generically smooth if
the corresponding component of ${\rm D}(d,g;s)_{sc}$ is, e.g. if $s \le
4$. Since the dimension $\dim W$ of an $s$-maximal family is easy to compute
for $s \le 4$, we get in particular that $g \ge g_1:= 3d-18$ is necessary (for
$d > 9$ and $S$ smooth) while $g > g_2:=\lfloor (d^2-4)/8 \rfloor$ is
sufficient for a $3$-maximal family $W$ to be an irreducible component. Indeed
$H^1(\sI_C(3))= 0$ holds for $g > g_2$ by \cite[Cor.\,17]{K1}. Thus $W \subset
\HH(d,g)_{sc}$ may be a non-reduced component only if $g_1 \le g \le g_2$. 
The author conjectured in \cite{K2} that $H^1(\sI_C(3))\ne 0, d \ge 14$ and
$g_1 \le g \le g_2$ imply that a $3$-maximal
family 
is a non-reduced irreducible component of $
\HH(d,g)_{sc}$ and proved it in some subrange. Ellia extended the range
substantially in \cite{E}, and pointed out that the conjecture is false unless
we also suppose $H^1(\sI_C(1))= 0$. 
Looking more closely to Ellia's proof in \cite{E}, we extend the modified
conjecture (assuming also $H^1(\sI_C(1))= 0$) further in Theorem~\ref{4.7}, 
and we prove the existence of such components in a range covering all cases 
where the conjecture is proven (Theorem~\ref{exis}).

All components so far (i.e. for $s = 3$) are irreducible components $W$ of $
\HH(d,g)_{sc}$ whose general curve sits on a smooth surface {\sf containing a
  line $\f$}. 
Motivated by the theory of the Noether-Lefschetz locus (NL), the components of
NL whose general surface $S$ is smooth and {\sf contains a line} are very
special (\cite{Gr,Vo, EP}). In Section 3 we explicitly describe $s$-maximal
families for $s \ge 4$ on such surfaces $S \supset \f$ with Picard number 2.
If $d > s^2$ and $C \equiv a\f+b(H-\f)$, $H$ a hyperplane section, is a
general curve of $W$ and $a>s-4$ (resp. $a>s$), then $W$ is a unique
irreducible (resp.\;generically smooth) component of $
\HH(d,g)_{sc}$ except possibly in two cases. There are natural maps $\alpha_C:
H^0(S,\sN_{S}) \to H^1(C,\sN_{C/S})$, where $\coker(\alpha_C)$ is the
obstruction space of $ {\rm D}(d,g;s)_{sc}$ at $(C,S)$, and $\gamma_C:H^0(\sN_C)
\to H^1(\sI_C(s))$ 
that infinitesimally determine $pr_1$ at $(C,S)$ (see \eqref{alphaN} below).
In \cite[Thms.\;1.1, 1.2, 7.3]{KleOtt} we computed $\coker(\alpha_C)$ and
hence $\dim V$, and we showed that $W$ is a unique (resp. generically smooth)
component under a little too restrictive assumptions, 
see Theorem~\ref{mainQs} for the new version and Remark~\ref{conjrem} for the
generalizations in this paper compared to \cite[Thms.\;1.1, 1.2, 7.3]{KleOtt}.
In particular, also for $s \ge 5$ there are only three very explicitly
described families (corresponding to $H^1(\sI_C(s))\ne 0$), that may
correspond to non-reduced components, otherwise $W$ is a generically
  smooth component in $\HH(d,g)_{sc} $. Thus we extend in
Theorem~\ref{mainQs} (i) and (ii) \cite{KleOtt} in a good number of cases. For
the three families we still lack sufficient information of the map $\gamma_C$
to state that all these components are non-reduced. We expect that they are
non-reduced components which we prove for $s=4$ and for one family for
$s=5$, 
and in general provided $\gamma_C$
is non-zero.

While we should have liked to understand the map $\gamma_C$ better, the other
important map $\alpha_C$ is under the assumption $H^1(\sI_C(s-4))=0$ given by
its partner for the relative Picard scheme, $\Pic$. In fact the rational
morphism $\pi:{\rm D}(d,g;s)_{sc}\ -\!\to \Pic$, $(C,S) \mapsto (\sO_S(C),S)$
is smooth at $(C,S)$ provided $H^1(\sI_C(s-4))=0$ (\cite[Rem.\;4.5]{SB}). The smoothness of the morphisms $pr_1$ and $\pi$, as
well as \cite[Prop.\,1.5]{EP})
are important in the proof of
Lemma~\ref{lemGrothcomp} and hence of the theorems. In particular the
comparison between $\HH(d,g)$ and $\Pic$ works very well under the assumption
$H^1(\sI_C(s)) = 0 = H^1(\sI_C(s-4))$ which we in Section 2 illustrate by
giving an example (Remark~\ref{exPic}) of a non-reduced component of $\Pic$.

We thank O. A. Laudal for interesting discussions on
deformations and 
Hilbert-flag schemes and J. C. Ottem for working together in \cite{KleOtt} which has inspired this paper.

\subsection {Notations and terminology} In this paper the ground field $k$ is
{\em algebraically closed of characteristic zero} (and equal to the complex
numbers 
when the concept ''very general'' is used).
A surface $S$ in $\proj{3}$ is a hypersurface, and a curve $C$ in $\proj{3}$
(resp. in $S$) is a {\it pure one-dimensional} subscheme of
$\proj{}:=\proj{3}$ (resp. $S$) with ideal sheaf $\sI_C$ (resp. $\sI_{C/S}$)
and normal sheaf $\sN_C = {\sH}om_{\sO_{\proj{}}}(\sI_C,\sO_C)$ (resp.
$\sN_{C/S} = \Hom_{\sO_{S}}(\sI_{C/S},\sO_C)$). 
If $\sF$ is a coherent
$\sO_{\proj{}}$-Module, we let $H^i(\sF) = H^i(\proj{},\sF)$, $h^i(\sF) = \dim
H^i(\sF)$, $\chi(\sF) = \Sigma (-1)^i h^i(\sF)$  and 
\ $s(C) = \min \{\, n \, \arrowvert \, h^0(\sI_C(n)) \neq 0 \} \, .$ We denote
by $\HH(d,g)$ (resp. $\HH(d,g)_{sc}$) the Hilbert scheme of (resp. smooth
connected) space curves of Hilbert polynomial $\chi(\sO_C(t))=d(C)t+1-g(C)$,
and we let $d :=d(C)$ and $g:=g(C)$. 
A curve $C$ is called {\it unobstructed} if $\HH(d,g)$ is smooth at the
corresponding point $(C)$. A curve in a small enough open irreducible subset
$U$ of $\HH(d,g)$ is called a {\it general} curve of $\HH(d,g)$. 
 A {\it generization} $C' \subset \proj{3}$ of $C \subset \proj{3}$
 in 
 $\HH(d,g)$ is the general curve of some irreducible subset of $\HH(d,g)$
 containing $(C)$. By an irreducible component of $\HH(d,g)$ we always mean a
 {\it non-embedded} irreducible component. 
 A member of a closed irreducible subset $V$ of ${\HH}(s)$ or $\HH(d,g)_{sc}$
 is called {\it very general} in $V$ if it is smooth and sits outside a
 countable union of proper closed subset of $V$.

\section{ Background}

In the following we recall some results from the the background section of
\cite{KleOtt} that extend ideas and results appearing in \cite{K1}, \cite{K4}
and \cite{K98} and that use the deformation theory developed by Laudal in
\cite{L1}; in particular the results rely on \cite[Thm.\;4.1.14]{L1}. Moreover
ideas in \cite{SB} and \cite{EP} are central.

\subsection {The Hilbert flag scheme and the relative Picard scheme}

Let ${\rm D}(d,g;s)$ 
be the Hilbert-flag scheme 
parameterizing pairs $(C,S)$ of curves 
$C$ contained in a degree $s$-surface $S \subset \proj{3}$ where $d$ and
$g$ are the degree and genus of $C$.
If $S$ is smooth then $\sN_{C/S} \simeq \omega_C \otimes \omega_S^{-1}$ and we
have a connecting homomorphism $\delta: H^0(\sN_S\arrowvert_C) \to
H^1(\sN_{C/S}) \simeq H^0(\sO_C(s-4))^{\vee}$ induced by the sequence $ 0
\to \sN_{C/S} \to \sN_{C} \to \sN_S\arrowvert_C \to 0$ of normal bundles. Let
$\alpha_C:=\delta \circ m$ be the composed map of the natural restriction
$m:H^0(\sN_S) \to H^0(\sN_S\arrowvert_C)$ with $\delta$, let $A^2:= \coker
\alpha$ and let $A^1$ be the tangent space of ${\rm D}(d,g;s)$ at $(C,S)$.
Then the tangent map $ A^1 \rightarrow H^0(\sN_C)$ of the $1^{st}$ projection,
\begin{equation} \label{proj1} ~ pr_1: {\rm D}(d,g;s) \longrightarrow {\rm
    H}(d,g)\ , \quad {\rm induced \ by} \quad pr_1((C_1,S_1))= (C_1)\, ,
\end{equation}
at $(C,S)$ fits into an exact sequence 
\begin{equation} \label{alphaN} 0 \to H^0(\sI_{C/S}(s)) \to A^1 \rightarrow
  H^0(\sN_C) \rightarrow H^1(\sI_C(s)) \rightarrow \coker \alpha_C \to
  H^1(\sN_{C}) \rightarrow H^1(\sO_{C}(s))\rightarrow 0 \,
\end{equation}
from which we deduce $\dim A^1-\dim A^2=(4-s)d+g + {s+3 \choose 3}-2$. Note
that $pr_1$ is a projective morphism (\cite[Thm.\;24.7]{H2}). By
\cite[Lem.\,A10]{K1} $pr_1$ is {\sf smooth} at $(C,S)$ if $ H^1(\sI_C(s))= 0$.
Moreover $A^2= \coker \alpha_C$ contains the obstructions of deforming the
pair $(C,S)$ (\cite[(2.6)]{K1})), and we have $ A^2=0$ for $s \le 4$ if $C$ is
smooth and connected and not a complete intersection (c.i.) in $S$ (by the
infinitesimal Noether-Lefschetz theorem if $s=4$ and because $\delta=0$ if
$s\le 3$). Let $d > s^2$. Restricting $pr_1$ to the open set ${\rm
  D}(d,g;s)_{sc}$ 
where the curves are smooth and connected, we get that an $s$-maximal family
$W$ of $ \HH(d,g)_{sc}$ containing $(C)$ is nothing but the image under $pr_1$
of an irreducible component of ${\rm D}(d,g;s)_{sc}$ containing $(C,S)$
(\cite[Def.\;1.24 and Cor.\;1.26]{K3}).

We also need to consider the Hilbert scheme, ${\HH}(s) \simeq \proj{{s+3
    \choose 3}-1}$, of degree-$s$ surfaces in $\proj{3}$, the $2^{nd}$
projection
$pr_2: {\rm
  D}(d,g;s) \longrightarrow {\HH}(s) , \quad {\rm induced \ by}
\quad pr_2((C_1,S_1))= (S_1)$,
and the relative Picard scheme, $\Pic$, over the open set in ${\HH}(s)$ of
smooth surfaces of degree $s$. There is a projection $p_2:
\Pic \to {\HH}(s)$, forgetting the invertible sheaf, and a rational map:
\begin{equation} \label{pi} \pi: {\rm D}(d,g;s)\ -\!-\!\rightarrow {\Pic} ,
  \quad {\rm induced \ by} \quad \pi((C_1,S_1))= ( \sO_{S_1}(C_1),S_1)
\end{equation} 
defined over the {\it open subset} $U \subset {\rm D}(d,g;s)$ given by pairs
$(C_1,S_1)$ where $C_1$ is Cartier on a smooth $S_1$. Obviously, if we
restrict to $U$ we have $p_2 \circ \pi=pr_2$. If $ H^1(S,\sO_{S}(C)) = 0$ then
$ \pi$ is {\sf smooth} at $(C,S)$ by \cite[Rem.\;4.5]{SB}. Indeed, $
H^1(S,\sL)= 0$, $\sL:= \sO_{S}(C)$, implies a surjective map $A^1 \to
T_{\Pic,\sL}$ between the tangent spaces of ${\rm D}(d,g;s)$ at $(C,S)$ and
$\Pic$ at $(\sL)$ and an injection $\coker \alpha_{C} \to \coker \alpha_{\sL}$
of their obstruction spaces 
where $\alpha_{\sL}$ is the composition of $\alpha_{C}$ with the connecting
homomorphism $H^1(\sN_{C/S}) \to H^2(\sO_S)$ induced from the exact sequence
$0 \to \sO_S \to \sO_S(C) \to\sN_{C/S} \to 0$ (cf.\;\cite[Thm.\,1]{EP},
\cite[Sect.\,4]{K4} and \cite{K98}). 
Noticing that $ H^1(S,\sO_{S}(C)) \simeq H^{1}(\proj{3},\sI_{C}(s-4))^{\vee}$,
we have

\begin{lemma} \label{lemGrothcomp} Let $S \subset \proj{3}$ be a smooth
  degree-$s$ surface, $H$ a hyperplane section, let $E$ and $C$ be curves
  on $S$ satisfying $C \equiv$ (i.e. linearly equivalent to) $eE +fH$ for some
  $e \ne 0, f \in \mathbb Z$.  \\[1mm]
 {\rm (i)} Suppose
$H^1(\sI_E(s-4)) = H^1(\sI_C(s-4))=0 \,.$ Then $ {\rm D}(d,g;s)$ is smooth
at $(C,S)$ if and only if $ {\rm D}(d(E),g(E);s)$ is smooth at $(E,S)$, and
we have \\[-3mm]
\[ \dim \coker \alpha_C + h^0(\sI_{C/S}(s-4)) = \dim \coker \alpha_E +
h^0(\sI_{E/S}(s-4))\] \\[-3mm]
{\rm (ii)} If $H^1(\sI_E(s))=0 =H^1(\sI_E(s-4)) = H^1(\sI_C(s-4))$ and ${\rm
  H}(d(E),g(E))_{sc}$ is a smooth irreducible scheme and $(E) \in {\rm
  H}(d(E),g(E))_{sc}$, then $ {\rm D}(d,g;s)$ is smooth at $(C,S)$ and every
$(C',S') \in {\rm D}(d,g;s)$ satisfying $C' \equiv eE' +fH'$ for some $(E',S')
\in {\rm D}(d(E),g(E);s)_{sc}$, $H'$ a hyperplane section of a {\sf smooth}
surface $S' \subset \PP^3$, belongs to the unique irreducible component of $
{\rm D}(d,g;s)$ containing $(C,S)$.
\end{lemma}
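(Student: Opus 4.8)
The plan is to transport everything to the relative Picard scheme $\Pic$ through the rational morphism $\pi$ of \eqref{pi}, where the relation $C\equiv eE+fH$ becomes the clean identity
\[
\sL_C\;:=\;\sO_S(C)\;=\;\sL_E^{\otimes e}\otimes\sO_S(fH),\qquad \sL_E:=\sO_S(E),
\]
between two points of $\Pic$ lying over the \emph{same} surface $S$. Since $H^1(\sI_C(s-4))=0=H^1(\sI_E(s-4))$, i.e. $H^1(S,\sL_C)=0=H^1(S,\sL_E)$ by the identification $H^1(S,\sO_S(C))\simeq H^1(\sI_C(s-4))^{\vee}$ recalled just before the lemma, the morphism $\pi$ is smooth at both $(C,S)$ and $(E,S)$ by \cite[Rem.\,4.5]{SB}. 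A smooth (hence flat) morphism transports regularity both ways between a point and its image, so ${\rm D}(d,g;s)$ is smooth at $(C,S)$ iff $\Pic$ is smooth at $(\sL_C)$, and ${\rm D}(d(E),g(E);s)$ is smooth at $(E,S)$ iff $\Pic$ is smooth at $(\sL_E)$. Thus part (i) reduces to comparing $\Pic$ at $(\sL_C)$ and $(\sL_E)$.

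This comparison is effected by the morphism $\mu:\Pic\to\Pic$, $\sL\mapsto\sL^{\otimes e}\otimes\sO_S(fH)$, which sends $(\sL_E)$ to $(\sL_C)$. By \cite[Thm.\,1, Prop.\,1.5]{EP} the obstruction map $\alpha_{\sL}:H^0(\sN_S)\to H^2(\sO_S)$ of $\Pic$ at $(\sL)$ is, up to the Kodaira--Spencer identification of $H^0(\sN_S)$, the semiregularity pairing with $c_1(\sL)$; hence $\sL\mapsto\alpha_{\sL}$ is additive in $\sL$ and $\alpha_{\sO_S(H)}=0$, as $\sO_S(H)=\sO_{\PP^3}(1)|_S$ extends over all of ${\HH}(s)$. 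Therefore $\alpha_{\sL_C}=e\,\alpha_{\sL_E}$, and since $e\neq0$ and $\mathrm{char}\,k=0$ this gives $\im\alpha_{\sL_C}=\im\alpha_{\sL_E}$, so $\coker\alpha_{\sL_C}=\coker\alpha_{\sL_E}$. As $H^1(\sO_S)=0$, the Zariski tangent space of $\Pic$ at $(\sL)$ is $\ker\alpha_{\sL}$, so $\mu$ is simultaneously an isomorphism on tangent spaces ($\ker\alpha_{\sL_E}=\ker\alpha_{\sL_C}$) and on obstruction spaces; being an isomorphism on both, $\mu$ induces an isomorphism of the miniversal deformations of $\Pic$ at $(\sL_E)$ and $(\sL_C)$, so $\Pic$ is smooth at one iff at the other. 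This closes the smoothness equivalence in (i).

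For the displayed dimension identity I would compare $\coker\alpha_C$ with $\coker\alpha_{\sL_C}$. From $0\to\sO_S\to\sL_C\to\sN_{C/S}\to0$ and $H^1(S,\sL_C)=0$, the connecting map $\delta':H^1(\sN_{C/S})\to H^2(\sO_S)$ is injective, and since $\alpha_{\sL_C}=\delta'\circ\alpha_C$ one gets a short exact sequence $0\to\coker\alpha_C\to\coker\alpha_{\sL_C}\to\coker\delta'\to0$. Using $H^1(\sN_{C/S})\simeq H^0(\sO_C(s-4))^{\vee}$ and $H^2(\sO_S)\simeq H^0(\sO_S(s-4))^{\vee}$, together with $H^1(\sI_{C/S}(s-4))\simeq H^1(S,\sL_C)^{\vee}=0$ (Serre duality on $S$), the sequence $0\to\sI_{C/S}(s-4)\to\sO_S(s-4)\to\sO_C(s-4)\to0$ yields $\dim\coker\delta'=h^0(\sO_S(s-4))-h^0(\sO_C(s-4))=h^0(\sI_{C/S}(s-4))$. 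Hence $\dim\coker\alpha_{\sL_C}=\dim\coker\alpha_C+h^0(\sI_{C/S}(s-4))$; the identical computation for $E$, combined with $\coker\alpha_{\sL_C}=\coker\alpha_{\sL_E}$ from the previous step, gives the asserted equality.

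For (ii) the extra hypothesis $H^1(\sI_E(s))=0$ makes $pr_1$ smooth at $(E,S)$ by \cite[Lem.\,A10]{K1}, so ${\rm D}(d(E),g(E);s)$ is smooth at $(E,S)$ because ${\rm H}(d(E),g(E))_{sc}$ is smooth at $(E)$; part (i) then gives that ${\rm D}(d,g;s)$ is smooth at $(C,S)$, which lies on a unique component $W'$. To place a pair $(C',S')$ with $C'\equiv eE'+fH'$ in $W'$, I would transport the irreducible family through $(E,S)$: on the dense open $\mathcal H^{\circ}\subseteq{\rm H}(d(E),g(E))_{sc}$ where $H^1(\sI_{E'}(s))=0=H^1(\sI_{E'}(s-4))$ (which contains $(E)$), both $pr_1$ and $\pi$ are smooth and $h^0(\sI_{E'}(s))$ is constant, so the surfaces through $E'$ glue to a projective bundle and $pr_1^{-1}(\mathcal H^{\circ})$ is irreducible; applying $\mu\circ\pi$ and pulling the resulting irreducible locus in $\Pic$ back through the smooth $\pi$ on ${\rm D}(d,g;s)$ via the linear systems $|C'|$ produces an irreducible family through the smooth point $(C,S)$, hence contained in $W'$, and taking closures covers every $(C',S')$ coming from the component of ${\rm D}(d(E),g(E);s)_{sc}$ through $(E,S)$. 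The main obstacle is to reach \emph{every} $(C',S')$ of the stated form, including those coming from $(E',S')$ over curves $E'$ where $h^1(\sI_{E'}(s))$ jumps, over which the fibre $\PP(H^0(\sI_{E'}(s)))$ of $pr_1$ is larger and ${\rm D}(d(E),g(E);s)_{sc}$ may acquire extra components. Here I would use that ${\rm H}(d(E),g(E))_{sc}$ is irreducible, that each fibre of $pr_1$ is an irreducible linear system, and that $pr_1$ is proper (so the component through $(E,S)$ dominates the irreducible ${\rm H}(d(E),g(E))_{sc}$), to build for such an $(E',S')$ with $S'$ smooth an irreducible subset of ${\rm D}(d,g;s)$ joining $(C',S')$ to $(C,S)$; since $(C,S)$ is a smooth point this forces $(C',S')\in W'$. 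Controlling this connectedness across the jumping locus---equivalently, checking that the classes $\mu(\sL_{E'})$ stay in one irreducible piece of $\Pic$, where $\mu$ and the smoothness of $\pi$ again reduce the question to the irreducibility already secured over $\mathcal H^{\circ}$---is the delicate bookkeeping I expect to be the crux of (ii).
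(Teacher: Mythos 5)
Your proposal follows exactly the route the paper takes: the paper's entire proof consists of the two observations that $\pi$ is smooth at $(C,S)$ (because $H^1(\sI_C(s-4))=0$ gives $H^1(S,\sO_S(C))=0$) and that $\alpha_{\sO_S(C)}=e\cdot\alpha_{\sO_S(E)}$ by \cite[Prop.\,1.5]{EP}, with all further details deferred to \cite[Lem.\,2.2]{KleOtt}. Your treatment of (i) is complete and correct: transporting smoothness both ways along the smooth morphism $\pi$, using $e\neq 0$ in characteristic zero to identify kernels and cokernels of $\alpha_{\sL_C}$ and $\alpha_{\sL_E}$, and the bookkeeping $\dim\coker\alpha_{\sL_C}=\dim\coker\alpha_C+h^0(\sI_{C/S}(s-4))$ (via injectivity of $\delta'$, Serre duality, and $H^1(\sI_{C/S}(s-4))\simeq H^1(S,\sO_S(C))^{\vee}=0$) is precisely the comparison of obstruction spaces behind the displayed identity. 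The first half of (ii), deducing smoothness of ${\rm D}(d,g;s)$ at $(C,S)$ from smoothness of $pr_1$ at $(E,S)$ and part (i), is also right. The one genuine soft spot is the one you flag yourself: the claim that \emph{every} $(C',S')$ with $C'\equiv eE'+fH'$ lies in the single component through $(C,S)$ requires controlling the loci where $h^0(\sI_{E'}(s))$ or $h^0(S',\sO_{S'}(C'))$ jumps, since over those loci the incidence variety you build can a priori acquire extra irreducible components. Your proposed remedy (chaining through the irreducible fibres of $pr_1$ and the irreducible linear systems $|C'|\times\{S'\}$, and using that an irreducible subset containing a point at which the ambient scheme is smooth lies in the unique component through that point) is the right idea but is not carried to completion. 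Note, however, that the paper gives no more detail either, deferring this clause entirely to \cite[Lem.\,2.2]{KleOtt}; and in the paper's actual application of the lemma $E$ is a line, so $pr_1:{\rm D}(d(E),g(E);s)_{sc}\to{\rm H}(1,0)_{sc}$ has fibres of constant dimension over a homogeneous base and the irreducibility you need is immediate there.
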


\begin{proof} The main points in the proof is that $\pi$ is smooth at $(C,S)$
  and that $ \alpha_{\sO_{S}(C)} = e \cdot \alpha_{\sO_{S}(E)}$
  (\cite[Prop.\,1.5 and Constr.\,1.8]{EP}) leads to an isomorphism of their
  cokernels, see \cite[Lem.\;2.2]{KleOtt}. 
\end{proof}

 \begin{remark}[{\sf Example of a non-reduced irreducible
     component of the 
     relative Picard scheme $\Pic$}] \label{exPic}
  \ \ \\[-4mm]
 
 If both $H^1(\sI_C(s))$ and $H^1(\sI_C(s-4))$ vanish, then the morphisms
 $pr_1$ of \eqref{proj1} and $\pi$ of \eqref{pi} are smooth at $(C,S)$. Using
 this we get that many properties of the Hilbert scheme ${\rm H}(d,g)_{sc}$ at
 $(C)$ are transferred to the relative Picard scheme $\Pic$ at
 $(\sO_{S}(C),S)$, and vice versa. For instance if we take the general curve
 $C$ of the non-reduced component $V \subset {\rm H}(14,24)_{sc}$ of Mumford,
 and a smooth general surface $S$ of degree 6 containing $C$, then
 $(\sO_{S}(C),S)$ is the general point of a non-reduced irreducible component
 $P$ of \ $\Pic$. This follows from the fact that smooth morphisms take
 generic (resp. smooth) points onto generic (resp. smooth) points. Of course
 we have to verify the assumptions;
$$H^1(\sI_C(s))= H^1(\sI_C(s-4))=0\,  \ \ \ {\rm  for}  \ \ \ s=6 $$ 
with $S$ smooth and $(C,S)$ a general point of \ $ {\rm D}(d,g;s)$. For the
general curve in Mumford's example it is known that the homogeneous ideal
$I(C)$ allows 4 minimal generators of degree $\{3, 6, 6, 6\}$ that correspond
to smooth surfaces by \cite{Cu} and that $H^1(\sI_C(v))=0$ for $v \notin \{3,
4, 5\}$. Hence we may take the general degree-$6$ surface containing $C$ to be
smooth. Thus we conclude that $P$ is a non-reduced irreducible component of \
$\Pic$, see \cite{CiLo,Dan,DanMu} for a comparison
with the Noether-Lefschetz locus.
\end{remark}

We will also need the following lemma 
(cf. \cite[Cor.\! II 3.8]{Lop} and see \cite[Thm.\! II 3.1]{Lop} for a proof).

\begin{lemma} \label{lope} (A. Lopez) Let $E \subset \proj{3}$ be a smooth
  irreducible curve, let $n \ge 4$ be an integer and suppose the degree of
  every minimal generator of the homogeneous ideal of $E$ is at most $n-1$.
  Let $S$ be a very general smooth surface of degree $n$ containing $E$ and
  let $H$ be a hyperplane section. Then $\Pic(S) \simeq \mathbb Z \oplus
  \mathbb Z$ and we may take $\{ \sO_{S}(H), \sO_{S}(E) \}$ as a $ \mathbb
  Z$-basis for $\Pic(S)$.
\end{lemma}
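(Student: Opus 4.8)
The plan is to establish the Picard group structure and basis via the Noether-Lefschetz theory for surfaces containing a fixed curve, exactly as in the referenced result \cite[Thm.\! II 3.1]{Lop}. The key input is that the generators of the homogeneous ideal of $E$ all have degree at most $n-1$, which is precisely the hypothesis ensuring that $E$ is cut out (scheme-theoretically, in the relevant range) by forms of degree $< n$, so that the linear system of degree-$n$ surfaces containing $E$ is base-point free away from $E$ itself and behaves well.

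First I would recall the general Noether-Lefschetz philosophy: for a very general smooth surface $S$ of degree $n \ge 4$ in $\proj{3}$, the Picard group is $\mathbb Z$, generated by the hyperplane class $\sO_S(H)$. When we instead restrict to the family of smooth degree-$n$ surfaces containing a fixed smooth curve $E$, the very general such surface $S$ has Picard group generated by $\sO_S(H)$ together with the class $\sO_S(E)$, so that $\Pic(S)$ has rank at most $2$ and $\{\sO_S(H), \sO_S(E)\}$ spans. The substance of the argument, carried out in \cite{Lop}, is a monodromy/infinitesimal Noether-Lefschetz computation showing that no further divisor classes appear: for very general $S$ in this subfamily, the image of the restriction map on Picard groups is exactly the subgroup generated by these two classes. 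I would cite \cite[Thm.\! II 3.1]{Lop} for this, as the excerpt permits.

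The remaining step is to verify that $\{\sO_S(H), \sO_S(E)\}$ is genuinely a $\mathbb Z$-basis rather than merely a generating set, i.e. that these two classes are linearly independent and primitive. Linear independence follows because $H$ and $E$ are not proportional divisor classes on $S$ (for instance, by comparing degrees against $H$ and the self-intersection numbers $H^2 = n$, $H \cdot E = \deg E$, $E^2$ computed from adjunction on $S$, so the intersection matrix is nondegenerate since $E$ is not a multiple of a hyperplane section for a general such surface). Primitivity of the lattice they generate is the content of the conclusion that they form a basis and not merely a finite-index sublattice; this is again exactly what \cite[Thm.\! II 3.1]{Lop} asserts, since the theorem identifies the full Picard group with $\mathbb Z\sO_S(H) \oplus \mathbb Z\sO_S(E)$.

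The main obstacle, and the part genuinely requiring \cite{Lop}, is the second step: ruling out extra algebraic classes on the very general $S$ in the constrained family. A naive application of classical Noether-Lefschetz fails because the family is constrained to contain $E$, so $\Pic$ can never drop to rank $1$; one must show it does not jump above rank $2$. This requires the infinitesimal Noether-Lefschetz argument adapted to surfaces through a fixed curve, controlling the Hodge-theoretic obstruction to lifting $(1,1)$-classes, and it is precisely here that the degree bound on the generators of the ideal of $E$ enters --- it guarantees the surfaces through $E$ move in a large enough family for the monodromy argument to force the Picard number down to the minimum value $2$. Since this is established in \cite[Thm.\! II 3.1]{Lop}, I would invoke it directly rather than reprove it.
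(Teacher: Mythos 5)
Your proposal is correct and matches the paper's treatment: the paper gives no independent argument for this lemma but simply refers the reader to \cite[Thm.\! II 3.1]{Lop} (cf. \cite[Cor.\! II 3.8]{Lop}), which is exactly the result you invoke for the essential Noether--Lefschetz step. Your surrounding discussion of why the degree bound on the generators of the ideal of $E$ is needed and why the constrained family forces Picard rank exactly $2$ is accurate exposition of what Lopez proves, so there is nothing to correct.
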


\subsection {On the maximum genus of space curves}

Finally let us recall the definition of the maximum genus, $G(d,s)$, of smooth
connected space curves of degree $d$ not contained in a surface of degree
$s-1$, cf.\;\cite{GP1}. By
definition, 
\begin{equation} \label{maxgen} G(d,s) = \max \{\, g(C)\, \arrowvert \,
  (C) \in \HH(d,g)_{sc} \ {\rm \ and} \ \ H^0(\sI_{C}(s-1))=0 \, \}.
\end{equation}
In the C-range, i.e. when $d > s(s-1)$, Gruson and Peskine showed in \cite{GP1}
that
\begin{equation} \label{maxgen2} G(d,s) = 1+ \frac d 2 \left( \frac d s +
    s -4 \right) - \frac{r(s-r)(s-1)}{2s} \ \ \ { \rm where } \ d+r \equiv
  0 \ { \rm mod } \ s \ \ {\rm for} \ \ 0 \le r < s,
\end{equation}
and that $g(C)=G(d,s)$ if and only if $C$ is directly linked to a plane curve
$E$ of degree $r$ by a c.i. of type $(s,f)$, $f:=(d+r)/s$. 
%
In the ``extended $C$-range'': $s(s-1) \ge d \ge s^2-2s+2$ where the upper
part of the $B$-range is included, letting $ \mu:=d-( s^2-2s+3)$, one knows
that (\cite[Thm.\,A]{GP3})
\begin{equation} \label{maxgen3} G(d,s) = s^3-5s^2+9s-6+
  \frac{\mu(\mu+2s-3)}{2} \ \ \ \ {\rm for} \ \ \ s-3 \ge \mu \ge 0 \ , \ \ {
    \rm and }
\end{equation}
\begin{equation} \label{maxgen4} G(d,s) =1+d(s-3) \ \ \ \ {\rm for} \ \ \ d =
  s^2-2s+2 \ , \ \ {\rm i.e. \ \ for} \ \ \ \mu=-1 \,.
\end{equation}
Moreover, if $g(C)=G(d,s)$, then $C$ is ACM with $s(C)=s$ in \eqref{maxgen3},
and $C$ is a zero-section of the null correlation bundle twisted by $s-1$ in
\eqref{maxgen4}.


\section{Components of ${\rm H}(d,g)_{sc}$ for $s \ge 4$, the surface contains
  a line}

Let $S$ be a smooth surface of degree $s \ge 4$ in $\PP^3$
defined by a form $x_0P+x_1Q$, where $P,Q$ are very general homogeneous
polynomials of degree $s-1$. Families of curves on such surfaces were studied
in Theorems 1.1, 1.2 and 7.3 of \cite{KleOtt}. Here we improve upon these
results.

Let $\f_1=\{x_0=x_1=0\}$ and $\f_2=\{x_0=Q=0\}$. The hyperplane section
satisfies $H \equiv \f_1+\f_2$, $H^2=s$ and we may suppose $\Pic(S) \simeq\ZZ
\f_1 \oplus \ZZ H$ by Lemma~\ref{lope} and that $\f_2$ is a smooth connected
curve. If $C \equiv a\f_1+b\f_2$ then $d=C \cdot H$, $K=(s-4)H$ and the
adjunction formula which implies that the intersection matrix is
$(\f_i\cdot\f_j)=\left(\begin{smallmatrix}2-s & s-1\\
    s-1 & 0 \end{smallmatrix}\right)$, leads also
to 
\begin{equation} \label{sge5}
 d=a+(s-1)b \ \ {\rm and} \ \ g=1+(s-1)ab +
\frac{1}{2}((s-4)a+(s-4)(s-1)b-(s-2)a^2)\, .
\end {equation}
\begin{lemma} \label{nefbp} 
  Let $S$ be a smooth surface of degree $s$ with $\f_1,\f_2$ as above. It holds:
\item {\rm (i)} Any effective divisor on $S$ is linearly equivalent to
  $a\f_1+b\f_2$ where 
$a,b\ge 0$. 
\item {\rm (ii)} Every nef divisor is linearly equivalent to $a\f_1+b\f_2$ where
  $(s-1)b\ge (s-2)a\ge 0$.
\item {\rm (iii)} The divisor $D=(s-1)\f_1+(s-2)\f_2$ is base-point free. 
\item {\rm (iv)} Any divisor $C$ satisfying $C \cdot \f_1 \ge 0$ and $C \cdot
  \f_2 \ge 0$ is nef and base-point free.
\item {\rm (v)} If a divisor $C$ satisfies $C \cdot \f_1 \ge
  0$ and $C \cdot \f_2 >0$ then $|C|$ contains a smooth irreducible curve.
\end{lemma}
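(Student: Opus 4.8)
The plan is to treat the five parts in order, since each builds on the previous, working throughout in the coordinates $C=a\f_1+b\f_2$ with the two intersection numbers $C\cdot\f_1=(2-s)a+(s-1)b$ and $C\cdot\f_2=(s-1)a$ read off from the given matrix. Two facts are used repeatedly: $\f_1$ is a line (so $\f_1\cong\PP^1$ with $\f_1^2=2-s$), while $\f_2$ is irreducible with $\f_2^2=0$, hence nef; moreover $H=\f_1+\f_2$ is very ample, and $\f_2$, being the moving part of the pencil of plane sections through the line $\f_1$ with $\f_2^2=0$, is base-point free.

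For (i) I would take an irreducible curve $\f\equiv a\f_1+b\f_2$. Intersecting with the nef class $\f_2$ gives $(s-1)a=\f\cdot\f_2\ge0$, so $a\ge0$; if $\f\ne\f_1$ then $\f\cdot\f_1\ge0$ (distinct irreducible curves meet non-negatively), which reads $(s-1)b\ge(s-2)a\ge0$ and forces $b\ge0$. Since $\f_1=1\cdot\f_1$ and $\f_2=1\cdot\f_2$ also have non-negative coordinates, every irreducible curve, and hence by additivity every effective divisor, lies in the cone spanned by $\f_1,\f_2$. Part (ii) is then the dual statement: by (i) the Mori cone is generated by $\f_1$ and $\f_2$, so $C$ is nef iff $C\cdot\f_1\ge0$ and $C\cdot\f_2\ge0$, i.e. iff $(s-1)b\ge(s-2)a\ge0$.

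The heart of the argument is (iii). I would rewrite $D=(s-2)H+\f_1$, note $D\cdot\f_1=0$ so that $\sO_{\f_1}(D)\cong\sO_{\PP^1}$, and run the restriction sequence
\[
0\to\sO_S((s-2)H)\to\sO_S(D)\to\sO_{\f_1}(D)\to0.
\]
Since $H^1(\sO_S((s-2)H))=0$ (immediate from the structure sequence of $S\subset\PP^3$ and the vanishing of intermediate cohomology on $\PP^3$), the restriction $H^0(\sO_S(D))\to H^0(\sO_{\f_1}(D))=k$ is surjective, so $|D|$ has no base point on $\f_1$; as $|D|\supseteq|(s-2)H|+\f_1$ confines the base locus to $\f_1$, this gives $D$ base-point free. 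For (iv) the nefness is (ii), and for base-point-freeness I would exhibit $C$ as a non-negative \emph{integer} combination of the base-point free divisors $H,\f_2,D$: if $b\ge a$ then $C=aH+(b-a)\f_2$, while if $a\ge b$ then $C=[(s-1)b-(s-2)a]\,H+(a-b)\,D$, the coefficients being non-negative exactly by the nef inequalities. Obtaining a genuinely integral decomposition (rather than a merely real one coming from the extremal rays $\f_2$ and $D$) is the step I expect to be most delicate, and the case split $a\ge b$ versus $b\ge a$ is what makes it work.

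Finally, for (v) the hypotheses $C\cdot\f_1\ge0$, $C\cdot\f_2>0$ give, via (iv), that $C$ is nef and base-point free, and a short computation of $C^2=a(C\cdot\f_1)+(s-1)ab$ (with $a>0$) shows $C^2>0$, so $C$ is big. In characteristic zero Bertini makes the general member $C'\in|C|$ smooth; for irreducibility it suffices to show $C'$ is connected, i.e. $h^0(\sO_{C'})=1$. I would read this off the sequence $0\to\sO_S(-C')\to\sO_S\to\sO_{C'}\to0$, where $H^1(\sO_S(-C'))\cong H^1(\omega_S\otimes\sO_S(C'))^{\vee}=0$ by Kawamata--Viehweg vanishing applied to the big and nef $C'$; since $H^0(\sO_S(-C'))=0$ this yields $h^0(\sO_{C'})=h^0(\sO_S)=1$. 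A smooth connected curve is irreducible, completing (v).
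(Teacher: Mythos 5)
Your proof is correct, and since the paper itself gives no argument here but simply defers to \cite{KleOtt}, your write-up is in effect a reconstruction of that proof; the route you take (effective cone via intersecting irreducible curves with $\f_1$ and $\f_2$, nef cone by duality, base-point-freeness of $D=(s-2)H+\f_1$ via the restriction sequence to $\f_1$ using $D\cdot\f_1=0$, the integral decomposition of a nef class into non-negative combinations of the base-point-free classes $H$, $\f_2$, $D$ according to the sign of $a-b$, and Bertini plus Kawamata--Viehweg/Serre duality for smoothness and connectedness) is exactly the expected one. The only point you pass over silently is in (v): $C^2=a(C\cdot\f_1)+(s-1)ab>0$ needs $b>0$ as well as $a>0$, but this is immediate since $C\cdot\f_1\ge 0$ gives $(s-1)b\ge (s-2)a\ge s-2>0$. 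Everything else, including the verification that $\f_2$ is base-point free because distinct members of the pencil of residual plane sections through $\f_1$ are disjoint ($\f_2^2=0$), is sound.
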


See \cite{KleOtt} for a proof. Now the Kawamata-Viehweg vanishing theorem, for
$D \equiv a\f_1+b\f_2$, implies:

\begin{equation} \label{KVvan} H^i(S,\mathcal{O}_S(D))=0 \ \ {\rm for} \ i>0 \
  \  {\rm provided} \  \ a>s-4 \  {\rm and} \ (s-1)b \ge (s-2)a+s-4 
\end{equation}
because the assumptions on $a,b$ imply that $D-K$ is nef and big. From this,
we got Theorem 7.3 of \cite{KleOtt}. Here we generalize \eqref{KVvan} leading
to a significant improvement of 
that result, cf.\, \cite[Lem.\,2.5]{KleOtt}. 
Indeed we have 
\begin{lemma} \label{nefbp} Let $S$ be a smooth surface of degree $s$ with
  $\f_1,\f_2$ as above and let $C$ be a divisor linearly equivalent to
  $a\f_1+b\f_2$ where $(s-1)b - (s-2)a = t$ with $t \ge -2$. Moreover suppose
  $ a>s-4$. 
  Then
\item {\rm (i)} \ \ $H^i(S,\mathcal{O}_S(C))=0  \quad {\rm
    for} \quad  i>0   \quad {\rm provided} \quad   t > -2  \ {\rm
    or} \ ( t = -2 \ {\rm
    and}  \ a = s -3)   , \
  {\rm     and} $ 
\item {\rm (ii)} \ \  $H^1(S,\mathcal{O}_S(C)) \simeq k \ , \ \  H^2(S,\mathcal{O}_S(C)) = 0 \quad
{\rm provided} \quad t = -2 \ \ {\rm
  and} \ \ a \ne s -3 \ .$
\end{lemma}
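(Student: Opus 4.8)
The plan is to read the cohomology of $\mathcal{O}_S(C)$ off its restriction to the line $\f_1\cong\proj{1}$ and to reduce everything to the vanishing already recorded in \eqref{KVvan}. From the intersection matrix one computes $C\cdot\f_1=(s-1)b-(s-2)a=t$ and $C\cdot\f_2=(s-1)a$, so that $\mathcal{O}_S(C)|_{\f_1}\cong\mathcal{O}_{\proj{1}}(t)$. First I would dispose of $H^2$: since $a>s-4$, the class $K-C\equiv(s-4-a)\f_1+(s-4-b)\f_2$ has a negative coefficient on $\f_1$, hence is not effective, so Serre duality together with the description of effective divisors in Lemma~\ref{nefbp} give $H^2(\mathcal{O}_S(C))\cong H^0(\mathcal{O}_S(K-C))^\vee=0$ in all cases of (i) and (ii).

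For $H^1$ the device is to peel off a single copy of $\f_1$ through
\[
0\to\mathcal{O}_S(C-\f_1)\to\mathcal{O}_S(C)\to\mathcal{O}_{\proj{1}}(t)\to0 ,
\]
keeping in mind that $H^1(\mathcal{O}_{\proj{1}}(t))=0$ for $t\ge-1$ and $\cong k$ for $t=-2$. When $a\ge s-2$ the twist $C-\f_1\equiv(a-1)\f_1+b\f_2$ has $\f_1$-coefficient $a-1>s-4$ and satisfies $(s-1)b-(s-2)(a-1)=t+(s-2)\ge s-4$, so \eqref{KVvan} yields $H^i(\mathcal{O}_S(C-\f_1))=0$ for $i>0$; the long exact sequence then gives $H^1(\mathcal{O}_S(C))\cong H^1(\mathcal{O}_{\proj{1}}(t))$, which is $0$ when $t\ge-1$ (case (i)) and $\cong k$ when $t=-2$ (case (ii), where indeed $a\ne s-3$). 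This settles both parts whenever $a\ge s-2$.

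The remaining case $a=s-3$ is where I expect the real work. Here $t=(s-1)b-(s-2)(s-3)\equiv-2\pmod{s-1}$, so either $t\ge s-3\ (\ge s-4)$, in which case \eqref{KVvan} applies directly to $C$ and gives $H^1(\mathcal{O}_S(C))=0$, or $t=-2$. In the latter situation $b=s-4$, whence $C-\f_1\equiv(s-4)(\f_1+\f_2)=(s-4)H=K$; this is the delicate coincidence, because naive peeling would now produce a spurious $H^1(\mathcal{O}_{\proj{1}}(-2))\cong k$. The resolution is that $\mathcal{O}_S(C-\f_1)=\omega_S$ has $H^1(\omega_S)\cong H^1(\mathcal{O}_S)^\vee=0$ but $H^2(\omega_S)\cong H^0(\mathcal{O}_S)^\vee=k$, so in
\[
0=H^1(\omega_S)\to H^1(\mathcal{O}_S(C))\to H^1(\mathcal{O}_{\proj{1}}(-2))\xrightarrow{\ \partial\ }H^2(\omega_S)\to H^2(\mathcal{O}_S(C))=0
\]
the boundary map $\partial\colon k\to k$ is forced to be an isomorphism, its cokernel being $H^2(\mathcal{O}_S(C))=0$; hence $H^1(\mathcal{O}_S(C))=\ker\partial=0$. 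This is exactly the dichotomy separating (i) from (ii) at $t=-2$: the extra $\f_1$ in $C$ (i.e.\ $a=s-3$ rather than $a\ge s-2$) makes $C-\f_1$ land on the canonical class, whose nonzero $H^2$ absorbs the would-be $H^1$. The main obstacle is thus recognizing the boundary coincidence $C-\f_1=K$ and verifying that $H^2(\omega_S)\ne0$ performs the cancellation; everything else is bookkeeping with \eqref{KVvan} and the $\proj{1}$-restriction sequence.
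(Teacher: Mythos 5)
Your proof is correct, and for the main case $a>s-3$ it is exactly the paper's argument: peel off $\f_1$ via $0\to\sO_S(C-\f_1)\to\sO_S(C)\to\sO_{\PP^1}(t)\to 0$, apply \eqref{KVvan} to $C-\f_1$ (the inequality $(s-1)b\ge (s-2)(a-1)+s-4$ is the same computation), and read $H^1(\sO_S(C))$ off $H^1(\sO_{\PP^1}(t))$; likewise the split of the case $a=s-3$ into $t\ge s-3$ (KV applied directly to $C$) versus $t=-2$, i.e.\ $b=s-4$, matches the paper. The one genuine divergence is in that last corner case: the paper disposes of $C\equiv\f_1+(s-4)H$ by asserting this curve is ACM, whereas you observe that $C-\f_1\equiv K$ and show the connecting map $H^1(\sO_{\PP^1}(-2))\to H^2(\omega_S)\simeq k$ is forced to be an isomorphism because $H^2(\sO_S(C))=0$ (which you get from Serre duality and non-effectivity of $K-C=-\f_1$, using Lemma 3.1(i)). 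Your version of this step is, if anything, more explicit and self-contained than the paper's one-line appeal to ACM-ness; the other cosmetic difference is that you establish $H^2(\sO_S(C))=0$ uniformly by Serre duality at the outset, while the paper extracts it from the tail of the long exact sequence \eqref{tneg}. Both routes are sound and yield the same dichotomy between (i) and (ii).
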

\begin{proof} Firstly we suppose $a > s-3$. To apply \eqref{KVvan} onto
  $D=C-\f_1$, we notice that \[   (s-1)b = (s-2)a  - (s -2)+s -2 + t \ge (s-2)(a-1) 
 + s -4\] by assumption. Thus $H^i(S,\mathcal{O}_S(C - \f_1))=0$ for
  $i>0 $ by  \eqref{KVvan}. Moreover since 
$$0\to \mathcal{O}_S(C-\f_1)\to \mathcal{O}_S(C)\to
\mathcal{O}_{\f_1}(C\cdot\f_1)\to 0$$ is exact, we deduce the exact
sequence \begin{equation} \label{tneg} \to H^1(\mathcal{O}_S(C-\f_1)) \to
  H^1(\mathcal{O}_S(C))\to H^1(\mathcal{O}_{\PP^{1}}(C\cdot\f_1)) \to
  H^2(\mathcal{O}_S(C-\f_1)) \to H^2(\mathcal{O}_S(C))\to 0
\end{equation} 
where $C\cdot\f_1=-a(s-2)+b(s-1) = t$ and we easily get the lemma in the case
$a \ne s-3$.

Finally if $a=s-3$ we apply \eqref{KVvan} onto $D:=C$. Since we have $a > s-4$
it suffices to show $(s-1)b \ge (s-2)(s-3)+s-4$, or equivalently $b \ge
s-3-1/(s-1)$ which holds if $b \ge s-3$. Since $b \ge s-4$ follows from the
assumption $(s-1)b= (s-2)a + t \ge (s-2)(s-3)+(-2)$ and the curve $\f_1+(s-4)H
\equiv (s-3,s-4)$ is ACM, we get the lemma.
\end{proof}

The assumption $t \ge -2$ in Lemma~\ref{nefbp} may be weakened. Indeed we have

\begin{lemma} \label{nefbp2} 
  Let $S$ be a smooth surface of degree $s$ with $\f_1,\f_2$ as above, let
  $C \equiv a\f_1+b\f_2$ and $t:=(s-1)b - (s-2)a $ and suppose $-1 \ge t \ge
  -4$, $ a>s-2$, $s \ge 4$ and $ (t,s) \ne (-4,4)$ (resp. $(t,s)=(-4,4)$).
  Then $$\dim 
  H^1(S,\mathcal{O}_S(C))= 
  -t-1 \ ( \,{\rm resp.} \ 4 \ {\rm if} \ (t,s)=(-4,4)\,) \ . $$
  \end{lemma}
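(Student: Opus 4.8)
The plan is to reuse the reduction of the previous lemma --- apply the Kawamata--Viehweg vanishing \eqref{KVvan} after subtracting copies of the line $\f_1$ from $C$ --- the only new feature being that, since $t$ is now more negative, I would peel off $\f_1$ \emph{twice}. I would first record the numerics: $C\cdot\f_1=(s-1)b-(s-2)a=t$, and passing from $C$ to $C-\f_1$ raises this number by $-\f_1\cdot\f_1=s-2$, so $(C-\f_1)\cdot\f_1=t+(s-2)$. Since $\f_1\simeq\PP^1$, each boundary term is an $\mathcal{O}_{\PP^1}(m)$, for which $h^0=\max(0,m+1)$ and $h^1=\max(0,-m-1)$.

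First I would apply \eqref{KVvan} to $D:=C-2\f_1\equiv(a-2)\f_1+b\f_2$. Its two hypotheses, $a-2>s-4$ and $(s-1)b\ge(s-2)(a-2)+s-4$, become $a>s-2$ and $t\ge-s$ respectively, both of which hold here ($t\ge-4\ge-s$ for $s\ge4$). This gives $H^i(S,\mathcal{O}_S(C-2\f_1))=0$ for $i>0$. Substituting this vanishing into
\[
0\to\mathcal{O}_S(C-2\f_1)\to\mathcal{O}_S(C-\f_1)\to\mathcal{O}_{\PP^1}(t+s-2)\to0
\]
produces $H^1(S,\mathcal{O}_S(C-\f_1))\simeq H^1(\PP^1,\mathcal{O}_{\PP^1}(t+s-2))$ together with $H^2(S,\mathcal{O}_S(C-\f_1))=0$.

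Next I would take the final step along
\[
0\to\mathcal{O}_S(C-\f_1)\to\mathcal{O}_S(C)\to\mathcal{O}_{\PP^1}(t)\to0.
\]
As $t\le-1$ gives $H^0(\PP^1,\mathcal{O}_{\PP^1}(t))=0$ and $H^2(S,\mathcal{O}_S(C-\f_1))=0$, the long exact sequence reduces to $0\to H^1(\mathcal{O}_S(C-\f_1))\to H^1(\mathcal{O}_S(C))\to H^1(\PP^1,\mathcal{O}_{\PP^1}(t))\to0$, whence $h^1(\mathcal{O}_S(C))=h^1(\mathcal{O}_S(C-\f_1))+(-t-1)$, using $h^1(\PP^1,\mathcal{O}_{\PP^1}(t))=-t-1$ for $-4\le t\le-1$.

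Finally I would evaluate the correction term $h^1(\mathcal{O}_S(C-\f_1))=\max(0,-t-s+1)$, which vanishes unless $t+s-2\le-2$, i.e.\ $t\le-s$; within $-4\le t\le-1$, $s\ge4$ this occurs only for $(t,s)=(-4,4)$, where it equals $1$. I therefore expect $h^1(\mathcal{O}_S(C))=-t-1$ for $(t,s)\ne(-4,4)$ and $h^1(\mathcal{O}_S(C))=3+1=4$ for $(t,s)=(-4,4)$, as claimed. The one delicate point is exactly this exceptional case: it is the only configuration in which $C-\f_1$ is not cohomologically trivial, because a single peeling leaves its restriction in degree $-2$ on $\PP^1$, and that extra unit of $H^1$ is precisely what creates the jump from $-t-1=3$ to $4$.
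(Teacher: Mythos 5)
Your proposal is correct and follows essentially the same route as the paper: the paper applies Lemma~\ref{nefbp} to $C-\f_1$ (whose invariant is $t+s-2\ge -2$, with equality exactly when $(t,s)=(-4,4)$) and then uses the restriction sequence \eqref{tneg} once, which, once Lemma~\ref{nefbp}'s own proof is unwound, is precisely your two-step peeling of $\f_1$ down to $C-2\f_1$ followed by \eqref{KVvan}. All your numerical verifications (the hypotheses of \eqref{KVvan} for $C-2\f_1$, the degrees $t$ and $t+s-2$ on $\f_1\simeq\PP^1$, and the isolation of the exceptional case $(t,s)=(-4,4)$) match the paper's computation.
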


  \begin{proof} Suppose $ (t,s) \ne (-4,4)$. Then we have
    $H^i(S,\mathcal{O}_S(C - \f_1))=0$ for 
    $i>0$ by Lemma~\ref{nefbp} since $a-1>s-3$ and $(s-1)b - (s-2)(a-1)=t+s-2
    > -2$, i.e.
    the     assumptions of     the lemma are 
    fulfilled for $(a-1)\f_1+b\f_2$. By \eqref{tneg}, we get $
    h^1(\mathcal{O}_S(C)) = h^1(\mathcal{O}_{\PP^{1}}(C\cdot\f_1)) = -t-1$
    because  $C\cdot\f_1 = t$. If $(t,s)=(-4,4)$, then 
    Lemma~\ref{nefbp} yields  $H^1(S,\mathcal{O}_S(C - \f_1)) \simeq k$ and we
    conclude by \eqref{tneg}. 
\end{proof}

We are now ready to generalize \cite[Thms.\;1.1, 1.2, 7.3]{KleOtt}. Below we
often write  $(a,b)$ for $ a\f_1+b\f_2$. 

\begin {theorem} \label{mainQs} Let $S \subset \proj{3}$ be a smooth
  degree-$s$ surface containing a line $ \Gamma_1$, let $\Gamma_2 \equiv H -
  \Gamma_1$ 
  be a smooth curve and suppose $\Pic(S) \simeq\ZZ \f_1 \oplus \ZZ \f_2$ (e.g.
  $S$ is very general)  and
  $s \ge 4$. Let  $C \equiv a\f_1+b\f_2$ 
  be a smooth connected curve of degree $d >
  s^2$ 
  with  $a \ne b$.

\item {\rm (i)} Suppose $ a>s-4$. Then $C$ belongs
  to a unique $s$-maximal family $W \subset \HH(d,g)_{sc}$. Moreover if
  $\tilde S$ is a degree-$s$ surface containing a very general member of $W$,
  then $\Pic(\tilde S)$ is freely generated by the classes of a line and a
  smooth plane degree-$(s-1)$ curve, and every $C \equiv a\f_1+b\f_2$
  contained in 
  some surface $S$ as above belongs to $W$. Furthermore 
  \[\dim W =(4-s)d+g + {s+3 \choose 3}+ {s-1 \choose 3} -s+ 1 \ \ with \ d,g \
  as \ in \ \eqref{sge5}, \]  and if $(s,a,b) \notin
  \{(4,6,4), (4,9,6)\}$ then $W$ is an
  irreducible component of \ $\HH(d,g)_{sc}$. 
\item {\rm (ii)} Suppose $s < a < $ {\Large$\frac{(s-1)b-2}{s-2}$} or $(a,b)
  = (s+1,s)$. Then 
  $W$ is moreover a generically smooth  irreducible  component  of \
  $\HH(d,g)_{sc}$. 
\item {\rm (iii)} Suppose {\Large$\frac{(s-1)b-2}{s-2}$}$ \le a \le
  ${\Large$\frac{(s-1)b}{s-2}$},  $(a,b) 
  \ne (s+1,s)$ and $(s,a,b) \notin
  \{(4,6,4), (4,9,6)\}$. 
  Then $W$ is a non-reduced 
  irreducible component of \ $\HH(d,g)_{sc}$ provided $h^0(\sN_C) > \dim W$,
  or equivalently, provided the  map $H^0(\sN_C) \to H^1(\sI_C(s))$ appearing in
  \eqref{alphaN}   is non-zero for   the general curve $C$ of $W$.
  In particular $W$ is a non-reduced irreducible component of \
  $\HH(d,g)_{sc}$  if $s=4$, {\sf
    or} $s=5$ {\rm and} $(a,b)=(4k,3k)\, , \ k \geq 2$. 
\end{theorem}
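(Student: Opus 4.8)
The plan is to pass to the Hilbert-flag scheme ${\rm D}(d,g;s)$ and transport everything from the line $\f_1$ via the comparison Lemma~\ref{lemGrothcomp}. First I note that a smooth connected curve $C$ of degree $d>s^2$ is irreducible and distinct from $\f_1$, so $t:=C\cdot\f_1=(s-1)b-(s-2)a\ge 0$; combined with $a>s-4$ this places $C$ in the range of Lemma~\ref{nefbp}(i), giving $H^1(S,\sO_S(C))=0$, i.e.\ $H^1(\sI_C(s-4))=0$. Writing $C\equiv e\f_1+fH$ with $e=a-b$ and $f=b$, the hypothesis $a\ne b$ guarantees $e\ne 0$. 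Since the line satisfies $H^1(\sI_{\f_1}(s))=H^1(\sI_{\f_1}(s-4))=0$ and ${\rm H}(1,0)_{sc}$ is smooth and irreducible, Lemma~\ref{lemGrothcomp}(ii) applies with $E=\f_1$, so ${\rm D}(d,g;s)$ is smooth at $(C,S)$ and there is a unique irreducible component through $(C,S)$ containing every $(C',S')$ with $C'\equiv e\f_1'+fH'$.

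For (i) the image of that component under the projective morphism $pr_1$ is the claimed unique $s$-maximal family $W$, and Lopez' Lemma~\ref{lope} applied to $\f_1$ (ideal generated in degree $1\le s-1$) identifies the Picard group of a very general degree-$s$ surface through the moving line with $\ZZ H\oplus\ZZ\f_1$, the second generator being $\f_2\equiv H-\f_1$, a smooth plane curve of degree $s-1$. For the dimension, \eqref{sge5} and $d>s^2$ force $sH-C$ and $(s-4)H-C$ to be non-effective, so $h^0(\sI_{C/S}(s))=h^0(\sI_{C/S}(s-4))=0$; hence $\dim W=\dim A^1$, while Lemma~\ref{lemGrothcomp}(i) makes $\dim\coker\alpha_C=\dim\coker\alpha_{\f_1}+h^0(\sI_{\f_1/S}(s-4))$ independent of $(a,b)$, equal to $\binom{s-1}{3}-s+3$ as in \cite{KleOtt}. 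Substituting into the identity $\dim A^1=\dim\coker\alpha_C+(4-s)d+g+\binom{s+3}{3}-2$ read off from \eqref{alphaN} gives the stated formula. That $W$ is an irreducible component outside the two exceptional triples I would obtain from its $s$-maximality together with a dimension count ruling out generizations of $C$ onto surfaces of degree $<s$; the two excluded $t=0$, $s=4$ curves are exactly those for which this comparison is not strict.

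For (ii) I apply Lemma~\ref{nefbp} to $C-4H$, whose class $(a-4)\f_1+(b-4)\f_2$ satisfies $a-4>s-4$ and has invariant $t-4\ge -2$ precisely under the hypotheses $a>s$ and $(s-2)a<(s-1)b-2$ (the case $(a,b)=(s+1,s)$ falling under the $t-4=-2$, $a-4=s-3$ clause). By Serre duality $H^1(\sI_C(s))\simeq H^1(S,\sO_S(C-4H))^{\vee}=0$, so $pr_1$ is smooth at $(C,S)$ by \cite[Lem.\,A10]{K1}. Together with the smoothness of ${\rm D}(d,g;s)$ from the first paragraph this shows $\HH(d,g)_{sc}$ is smooth at $(C)$ of dimension $\dim W=h^0(\sN_C)$, i.e.\ $W$ is a generically smooth component.

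For (iii) the same computation with $0\le t\le 2$ gives $t-4\in\{-4,-3,-2\}$, so Lemma~\ref{nefbp2} (and the $t-4=-2$ clause of Lemma~\ref{nefbp}) yields $H^1(\sI_C(s))\ne 0$, with $\dim H^1(\sI_C(s))=3-t$ (and $4$ when $(s,t)=(4,0)$). Using $h^0(\sI_{C/S}(s))=0$, the exact prefix $0\to A^1\to H^0(\sN_C)\xrightarrow{\gamma_C}H^1(\sI_C(s))$ of \eqref{alphaN} gives $h^0(\sN_C)-\dim W=\dim\im\gamma_C$, so $W$ is non-reduced exactly when $\gamma_C\ne 0$. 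The genuine obstacle is that $\gamma_C$ is not controlled in general; I can force it nonzero by a rank count only when $\dim H^1(\sI_C(s))>\dim\coker\alpha_C$. For $s=4$ we have $\coker\alpha_C=0$, so $\gamma_C$ is surjective onto the nonzero space $H^1(\sI_C(s))$; for $s=5$ and $(a,b)=(4k,3k)$ we have $t=0$, whence $\dim H^1(\sI_C(s))=3>2=\dim\coker\alpha_C$ and the map $H^1(\sI_C(s))\to\coker\alpha_C$ of \eqref{alphaN} has nonzero kernel $\im\gamma_C$. In both cases $\gamma_C\ne 0$ and $W$ is a non-reduced irreducible component, whereas the same inequality fails for the families left open ($t=1,2$ at $s=5$, and all of $t=0,1,2$ for $s\ge 6$).
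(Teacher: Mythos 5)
Your reconstruction of the framework is largely the paper's own: part (ii) and part (iii) are argued exactly as in the paper (vanishing of $H^1(\sO_S(C-4H))$ via Lemma~\ref{nefbp} applied to $C-4H$, and the rank count $\dim\im\gamma_C\ge h^1(\sI_C(s))-\dim\coker\alpha_C$ with $\coker\alpha_C=0$ for $s=4$ and $\dim\coker\alpha_C=2$ for $s=5$), and the first half of (i) (uniqueness of $W$, the Picard statement via Lemma~\ref{lope}, and the dimension formula via Lemma~\ref{lemGrothcomp} with $E=\f_1$) also follows the route of the paper and of \cite[Thm.\,7.3]{KleOtt}.

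The genuine gap is in the last claim of (i), that $W$ is an irreducible component whenever $(s,a,b)\notin\{(4,6,4),(4,9,6)\}$. You dispose of this with ``a dimension count ruling out generizations of $C$ onto surfaces of degree $<s$,'' which is both misstated and insufficient. First, since $W$ is $s$-maximal, a strictly larger component $V$ would have $s(V)>s$, so the generizations to be excluded lie only on surfaces of degree $>s$, not $<s$. Second, outside the range of (ii) the relevant families are the three families of Remark~\ref{conjrem}(B) with $H^1(\sI_C(s))\ne 0$, and the uniform argument (showing $g\ge G(d,s+1)$ so that no generization with $s(X)\ge s+1$ can exist) fails precisely for $(s,a,b)\in\{(4,7,5),(4,12,8),(5,8,6),(6,10,8)\}$, where $g<G(d,s+1)$. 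These four cases are not the two triples excluded from the theorem (which indeed have $t=0$, $s=4$, but are simply left unresolved); they are cases in which the theorem asserts componenthood and for which the paper must work hard: Ellia's theory of Halphen gaps and numerical characters to kill the option $s(X)=s+1$, liaison and mapping-cone resolutions to bound $h^0(\sN_X)$ via \eqref{maxrank} for maximal-rank and ACM generizations, and for $(6,10,8)$ an identification of the would-be generization with a zero-section of the null correlation bundle together with a moduli-space dimension count \eqref{serre1}. None of this is recoverable from a generic dimension comparison, so as written your proof of the component statement in (i) does not go through in these four cases.
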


\begin{remark} \label{conjrem} {\rm (A)} 
  Note that for any $C \equiv (a,b)$ of the theorem we have $a \le $
  {\Large$\frac{(s-1)b}{s-2}$} because $C$ is nef. 

  {\rm (B)} Since the assumption in {\rm (iii)} and $d > s^2$ imply $a > s$,
  there are exactly three families, \\[-2mm]

  \hspace{4cm}
  $(a,b)= ((s-1)n-\mu,(s-2)n-\mu+1)$, \ \ $n \ge 3$ \\[2mm]
  corresponding to {\rm (a)}: $\mu=s-3$, {\rm (b)}: $\mu = s-2$ and {\rm (c)}:
  $\mu=s-1$ respectively, that are covered by {\rm (iii)} of
  Theorem~\ref{mainQs} (where {\rm (c)} is on the border of the nef cone).
  Thanks to Lemma~\ref{nefbp2} we
  have \\[-2mm]
  
 \hspace{0.4cm} 
  $h^1(\sI_C(s))=1$ {\rm (}resp. $h^1(\sI_C(s))=2$, $h^1(\sI_C(s))=3$\,{\rm )}
  for the family  {\rm (a) (}resp.  {\rm (b), (c)\,)}\, \\[2mm]
  if $s \ge 5$ for family {\rm (c)}. We {\sf expect} that the
  corresponding irreducible components of \ $\HH(d,g)_{sc}$ are non-reduced.
  Indeed they are non-reduced for the family {\rm (c)} when $s=5$ by
  Theorem~\ref{mainQs}.

  {\rm (C)} If $s=4$ then Theorem~\ref{mainQs} implies that the three
  families described in {\rm (B)} form non-reduced components (with $n \ge 5$
  in {\rm (c)}). Thus we slightly generalize \cite[Thm.\,1.2 (II)]{KleOtt} by
  including two more cases of non-reduced components. Note that for family
  {\rm (c)}, $h^1(\sI_C(s))=4$ in this case.

  {\rm (D)} For $s=5$, Theorem~\ref{mainQs} {\rm (ii)}, resp. {\rm (iii)}
  slightly generalize (I), resp. (II) of \cite[Thm.\,1.2]{KleOtt} by including
  one more infinite family (resp. one more case of a non-reduced component).

  {\rm (E)} For $s \ge 6$ the generalization in Theorem~\ref{mainQs} of
  \cite[Thm.\,7.3]{KleOtt} is more substantial because Theorem~\ref{mainQs}
  {\rm (i)}, resp. {\rm (ii)} includes, as $s$ increases, an increasing number
  of infinite families for which 
  Theorem~\ref{mainQs} holds (compared with what holds by
  \cite[Thm.\,7.3]{KleOtt}).
\end{remark}


\begin{proof}
  The assumptions on $a,b$ in (i), resp. (ii), imply that $H^1(\sO_S(C))=0$,
  resp. $H^1(\sI_C(s))^{\vee} \simeq H^1(\sO_S(C-4H))=0$ by Lemma~\ref{nefbp}.
  We therefore get the stated properties of $W$ in (i), resp. (ii) by the same
  proof is that in \cite[Thm.\! 7.3]{KleOtt} (except for $W$ being a component
  in (i)); 
  the whole point is only that Lemma~\ref{nefbp} imply the vanishing of
  $H^1(\sO_S(C+vH))$ under weaker assumptions than those used in \cite[Thm.\!
  7.3]{KleOtt}, which lead to corresponding improvements of
  Theorem~\ref{mainQs} (i) and (ii). 
    
  In (i) it only remains to see that $W$ is an irreducible component. Observe
  that $H^1(\sI_C(s))=0$ does not only prove the generic smoothness of $W$,
  but it also implies that $W$ is an irreducible component of $ H(d,g)_{sc}$.
  There are, however, cases (i.e. the 3 families mentioned in (B) above) not
  covered by (ii) of the theorem, and for these we prove that $W$ is an
  irreducible component as in \cite[Thm.\,7.3]{KleOtt} by e.g. showing $g \ge
  G(d,s+1)$, except when $(s,a,b) \in \{(4,7,5), (4,12,8), (5,8,6),
  (6,10,8)\}$. In these four cases we were not able to show that $W$ was an
  irreducible component in \cite[Thms.\,1.1, 7.3]{KleOtt} because $g
  <G(d,s+1)$. To get (i) and (ii) as stated above it remains to consider them
  now.

  In these cases we suppose there is an irreducible component $V$ of
  $\HH(d,g)_{sc}$ satisfying $W \subset V$ and $\dim W < \dim V$. Since $W$ is
  $s$-maximal, we may suppose that the general curve $X$ of $V$ satisfies
  $s(X) > s$.

  {\bf The case} $(a,b)=(12,8)$ and $s=4$. For this class we compute the
  following numbers: $(d,g)=(36,145)$, $G(d,5)=147$ and $G(d,6)=145$,
  cf.\;\eqref{maxgen}. We first consider the option $s(X)=5$. To see that $X$,
  whence $V$ does not exist
  we use the theory on Halphen's gaps given in Ellia's paper \cite{E1}. If
  such a curve exists, we compute $r$ in $d(X)+r \equiv 0 \ { \rm mod } \ 5$,
  and we get $r=4=s(X)-1$. Using \cite[Prop.\;IV.3]{E1} it follows that there
  are no such $X$ with maximal numerical character, and by
  \cite[Lem.\;VI.2]{E1} that the genus of $X$ is equal to the genus of the
  numerical character of $X$. This implies that $X$ is arithmetically
  Cohen-Macaulay (ACM).

  By Riemann-Roch $\chi(\sI_X(5))= 20$, whence $h^1(\sO_X(5))=19$ and since
  $\chi(\sI_X(8))= 21$, we get $h^1(\sO_X(8)) \le 1$. It follows that $ \dim\,
  _0\!\Hom_R(I(X), H^1_{*}(\sO_X))=19$. Using
  \begin{equation} \label{maxrank} h^0(\sN_X) = 4d + \ \dim\!\,
    _{(-4)}\!{\Hom_R} (I(X), H^1_{*}(\sI_X))+ \dim\, _0\!\Hom_R (I(X),
    H^1_{*}(\sO_X)) \
\end{equation}
which holds for curves of maximal rank (cf.\;Remark~\ref{maxrankrem}), in
particular for ACM curves (where we have $\Hom_R (I(X), H^1_{*}(\sI_X))=0$),
we conclude that $h^0(\sN_X)=163 < g+33=178$ which contradicts the assumption
$\dim W < \dim V$.

Also the case $s(X) \ge 6$ must be considered. Since $g = G(d,6)=145$ and
$d(X) \equiv 0 \ { \rm mod } \ 6$, $X$ is a c.i. of type $(6,6)$ by
\eqref{maxgen2}. Since $\chi(\sI_X(6))= 12$ we get $h^1(\sO_X(6)) = 10$ and $
\dim_{(X)}\HH(d,g) = 4d+2h^1(\sO_X(6))=164$, a contradiction. This shows that
$W$ is a component.

{\bf The case} $(a,b)=(7,5)$  and $s=4$. We compute the following numbers:
$(d,g)=(22,57)$, $\chi(\sI_C(5))= 2$, $\chi(\sI_C(6))= 8$ and $G(d,5)=58$.
Note that $g=G(d,5)-1$; whence we suppose a generization $X$ of $C$ satisfying
$s(X)> 4$ exists. Since $H^1(\sO_C(6))=0$ by the speciality theorem,
(cf.\;\cite{GP1}), 
we get $H^1(\sO_X(6))=0$ by semicontinuity and in particular $h^0(\sI_X(6))
\ge 8$ and $s(X) \le 6$. 

Firstly suppose $s(X)= 6$. In this case $X$ belongs to the so-called $B$-range
in the classification of curves of maximal genus. Since $\chi(\sI_X(5))= 2$,
we get $h^1(\sO_X(5)) \ge 2$. Then \cite[Prop.\;3.2]{Har} implies that $d \ge
A(6,5)=23$, a contradiction (see \cite{E1} and \cite{Har} for the definition
of $A(k,f)$ and a discussion on the maximum genus when $d \le s(s-1)$). Or one
may use that the maximum genus $G(d,s)$ in range $B$ is known in our case
(cf.\;\cite{Har}, Cor.\;3.8 and Rem.\;3.8.1); it is $G(22,6)=55$, as
conjectured in \cite[Conj. 3.5]{Har}, whence such an $X$ does not exist.

Secondly suppose 
$s(X)= 5$. Since $d(X)+r \equiv 0 \ { \rm mod } \ s(X)$ allows $r=3=s(X)-2$,
it follows from \cite[Prop.\;IV.4]{E1} that $X$ is bilinked via c.i.'s of type
$(5,8)$ and $(5,4)$ to a degree-2 curve $Z$ satisfying $g(Z)=-1$ provided the
numerical character is maximal. By \cite[Lem.\;V.1]{E1} the minimal resolution
of $I(Z)$ is known (the resolution of two skew lines have the same Betti
numbers) and the mapping cone construction used twice yields:
\begin{equation*} 0 \rightarrow R(-8) \rightarrow R(-9) \oplus R(-8) \oplus
  R(-7)^{4} \rightarrow R(-8) \oplus R(-6)^4 \oplus R(-5) \rightarrow I(X)
  \rightarrow 0\, .
\end{equation*} 
Thus $X$ is of maximal rank and applying \eqref{maxrank}, we get $ h^0(\sN_X)
\le 4d+2=90$ because $\dim H^1_{*}(\sI_X)=h^1(\sI_X(4))=1$ and
$h^1(\sO_X(5))=1$. Since $\dim W = g+33 =90$, we get a contradiction.

Thirdly if the numerical character is not maximal we can again use
\cite[Lem.\;VI.2]{E1} (or \cite{Dol}) to see that the genus of $X$ is equal to
the genus of the numerical character of $X$. This implies that $X$ is ACM and
using \eqref{maxrank}, we get $ h^0(\sN_X) \le 4d+1=89$, i.e. a contradiction.

{\bf The case} $(a,b)=(8,6)$ and $s=5$. We compute the following numbers:
$(d,g)=(32,113)$, and $G(d,6)=115$. Note that $g=G(d,6)-2$. Firstly suppose
$s(X) \ge 7$. Then $X$ belongs to the $B$-range in the classification of
curves of maximal genus. The conjecture in \cite[Conj.\,3.5]{Har} is true for
$s(X) \le 9$ by \cite[Rem.\;3.8.1]{Har}. To find the conjectured value of
$G(d,7)$ we compute $A(7,f)$ for several $f$ to find the largest $f$
satisfying $A(7,f) \le d$ (see \cite{E1} or \cite[p.\,530]{Har} for the
definition of $A(s,f)$ and $B(s,f)$). We find $A(7,7)=33$, $A(7,6)=28$ and
$B(7,6)=31$, whence $G(d,7)=111$ by \cite[Conj.\,3.5]{Har} and such a
generization $X$ of $C$ does not exists because $g=113$.

  Therefore we suppose 
  $s(X)= 6$. Since $d(X)+r \equiv 0 \ { \rm mod } \ s(X)$ allows $r=4=s(X)-2$,
  it follows from Ellia's paper \cite[Prop.\;IV.4]{E1} that $X$ is bilinked
  via c.i.'s of type $(6,10)$ and $(6,5)$ to a degree-2 curve $Z$ satisfying
  $g(Z)=-2$ provided the numerical character is maximal. One may use
  \cite[Prop.\,V.2]{E1} to see $ h^0(\sN_Z) = \dim_{(Z)} \HH(d,g)=9$ and
  \cite[Cor.\,3.6]{K3} to compute the dimension of the bilinked family. Let us
  instead use that the minimal resolution of $I(Z)$ is known
  (\cite[Lem.\;V.1]{E1}). Then the mapping cone construction used twice yields
  a resolution:
  \begin{equation*} 0 \rightarrow R(-10) \rightarrow R(-9)^2 \oplus R(-8)^2
    \oplus R(-11) \rightarrow R(-8) \oplus R(-7)^3 \oplus R(-6) \rightarrow
    I(X) \rightarrow 0\, 
    \end{equation*} 
    where we have skipped a redundant term ($R(-10)$ ``in the middle'') since
    we may use the same surface of degree 6 in both linkages. This is a curve
    of maximal rank. To apply \eqref{maxrank}, we compute the following
    numbers using well known liaison formulas (e.g. \cite[(2.18.1)]{K3}):
    $h^1(\sI_X(4))=h^0(\sI_Z(-1))=1$, $h^1(\sO_X(6))=4+h^1(\sO_Z(1))=4$,
    $h^1(\sO_X(7))=1+h^1(\sO_Z(2))=1$ and that $H^1(\sI_X(v)) = 0$ for $v
    \notin \{4,5,6\}$. Hence $ \dim\, _0\!\Hom_R (I(X), H^1_{*}(\sO_X)) \le 7$
    and then \eqref{maxrank} implies $ h^0(\sN_X) \le 4d+7+1=136$. Since $\dim
    W =-d+g + 56 =137$ by the proven part of Theorem~\ref{mainQs}, we get a
    contradiction.

    If the numerical character is not maximal we can again use
    \cite[Lem.\,VI.2]{E1} (or \cite{Dol}) to see that the genus of $X$ is
    equal to the genus of the numerical character of $X$. This implies that
    $X$ is smooth and ACM. By Riemann-Roch, $\chi(\sI_X(6))= 4$ and
    $\chi(\sI_X(7))= 8$. If $h^0(\sI_X(6))=1$, then $h^1(\sO_X(6))=3$,
    $h^1(\sO_X(8))=0$ and we get at least $ \dim\, _0\!\Hom_R (I(X),
    H^1_{*}(\sO_X)) \le 7$ (by looking at the options given by
    $h^0(\sI_{X/S}(7)):=q$, $h^1(\sO_X(7))=4-q$, $0 \le q \le 4$ where $S$ is
    defined by a degree-6 polynomial), and if $h^0(\sI_X(6))=2$, then a
    linkage via a c.i. of type $(6,6)$ yields an ACM curve of degree 4 and
    genus 1, and one shows $ \dim\, _0\!\Hom_R (I(X), H^1_{*}(\sO_X))= 4$. In
    any case we get $ h^0(\sN_X) \le 135$ by \eqref{maxrank}, i.e. a
    contradiction, and the proof of this case is complete.

    {\bf The case} $(a,b)=(10,8)$ and $s=6$. We compute the following numbers:
    $(d,g)=(50,251)$, and $G(d,7)=252$. cf.\;\eqref{maxgen}. Note that
    $g=G(d,7)-1$. Let $X$ be a generization of $C$ satisfying $s(X)> 6$. We
    first consider the option $s(X)=7$. If such a curve exists, we have
    $d(X)+r \equiv 0 \ { \rm mod } \ 7$, i.e. $r=6=s(X)-1$. It is tempting to
    say that $(d,g)$ is a known Halphen's gap (cf.\,\cite{Dol}), but before we
    can do that we have to compute $G(d,8)$. We have $t^2-2t+2=d$ if $t=8$,
    and in this part of the $B$-range (or extended $C$-range) it is known that
    $G(d,8)=1+d(t-3)= 251$ by \eqref{maxgen4}. So strictly speaking, since
    $g=G(d,s(X)+1)$, it is not an Halphen's gap. We can, however, still use
    Ellia's results in \cite{E1} to show that $X$ does not exists. Indeed by
    \cite[Prop.\,IV.3]{E1} it follows that there are no such $X$ with maximal
    numerical character. Since \cite[Lem.\,VI.2]{E1} implies that the
    numerical character of $X$ had to be maximal, such an $X$ does not exist.
    
    Finally we suppose $s(X)\ge 8$. Since $g=G(d,8)$, it follows from
    \cite{GP3}, cf.\,\eqref{maxgen4} and \cite{E2}, that $X$ is
    a zero section of the null correlation bundle $\sE$; more precisely there
    is an exact sequence
\begin{equation} \label{serre} \ \ 0 \longrightarrow \sO_{\PP^3}
  {\longrightarrow} \sF \rightarrow \sI_{X}(14) \longrightarrow 0 \ 
\end{equation} 
where $ \sF= \sE(7)$. To compute the dimension of the component $V$ of $
\HH(d,g)_{sc}$ to which $X$ belongs we will use the formula appearing in
\cite[Cor.\;2.3]{Krefl} (see also Ellia's joint work with Fiorentini
\cite{EF}), stating that if we have \eqref{serre} and $H^1(\sI_X(c_1)) = 0 =
H^1(\sI_X(c_1-4))$, $c_i:=c_i(\sF)$ the $i$-th Chern class, then $\sF$ is a
smooth point of its moduli space ${\rm M_{\PP^3}}(c_1,c_2,c_3)$ if and only if
$X$ is unobstructed and
\begin{equation} \label{serre1}
  {\dim_{(\sF)}{\rm M_{\PP^3}}}(c_1,c_2,c_3)+h^0(\sF) =
 \dim_{(X)}\HH(d,g) + h^0(\omega_X(-c_1+4))\ .
\end{equation} 
 Using this formula for $ \sF=
 \sE(7)$,
 remarking that $H^1(\sE(v))=0$ for $v \ne -1$ is well known (since a section
 of $\sE(1)$ corresponds to two skew lines), we see that the assumptions for
 \eqref{serre1} to hold are fulfilled. Since $\dim {\rm M_{\PP^3}}(0,1,0)=5$
 and we have $h^0(\omega_X(-10))=1$ and $h^0(\sF)=h^0(\sE(7))=231$ by
 Riemann-Roch, we get $\dim V = \dim_{(X)}\HH(50,251) =235$. Finally using
 that $\dim W =-2d+g + 84+10-5=240 > \dim V$, we get a contradiction and the
 proof of (i) is complete.

 (iii) The component $W$ is non-reduced if we can show $\dim W < h^0(\sN_C)$
 for $C$ general. Since $\dim W = \dim A^1$, $H^0(\sI_{C/S}(5)) = 0$ and $\dim
 \coker \alpha_C = 2$ it suffices by \eqref{alphaN} to prove $h^1(\sI_C(5))
 \ge 3$. This follows from Lemma~\ref{nefbp2}, and proof of
 Theorem~\ref{mainQs} is complete.
\end{proof}

\begin{remark} \label{maxrankrem} Suppose that $X$ has maximal rank, or more
  generally that \ $_{0}{\Ext^i_R} (I(X), H^1_{*}(\sI_X))= 0$ for $i=$ 0, 1
  and 2. Then the formula \eqref{maxrank} follows easily from the exact
  sequence \cite[(2.4)]{Krao} because \cite[(2.1)]{Krao} implies
  ${_0\!\Ext_{\mathfrak m}^2}(I(X),I(X))=0$ and ${_0\!\Ext_{\mathfrak
      m}^3}(I(X),I(X)) \simeq \ _0\!\Hom_R (I(X), H^1_{*}(\sO_X))$ and the
  duality in \cite[(2.2)]{Krao} implies $ {_0}\!\Ext_{R}^2(I(X),I(X))^{\vee}
\simeq \ _{-4}\!\,{\Hom_R} (I(X), H^1_{*}(\sI_X))$.

If $X$ is ACM, or more generally if $ {_0}\!\Ext_{R}^2(I(X),I(X))=0$ and $X$
is of maximal rank, 
then $X$ is unobstructed, and we may replace $ h^0(\sN_X)$ by $\dim_{(X)}
\HH(d,g)$ in \eqref{maxrank} (cf.\;\cite[Thm.\;2.6]{Krao}, \cite{El}).
\end{remark}
  
\begin{remark} If $s= 4$ then the necessary condition $g+33 \ge 4d$ for $W$ to
  be an irreducible component implies $a\ge 4$ for $d > 16$ because $g =
  ad-2a^2+1$ in Theorem~\ref{mainQs}. By \cite[Rem.\;5.3]{KleOtt}, if
  $D:=C-4H=x\f_1+y\f_2$ is effective and $h^1(S,\sO_S(D))>0$, then either
  $\f_1$ is a fixed component of $|D|$ or $D$ is composed with a pencil. In
  the latter case we have $x=0$, i.e. $C=4\f_1+r\f_2$ for $r\ge 6$. 
  In a recent preprint (\cite{Nanew}) Nasu shows that these curves $C$ are
  obstructed for every $r \ge 6$ and in the case $r=6$ where
  $h^1(\sI_C(4))=1$, he shows that $W$ is a non-reduced component of
  $\HH(22,57)_{sc}$. {\sf More recently} we independently finished the case
  $(a,b)=(7,5)$, $s=4$ in the proof above for which $h^1(\sI_C(4))=2$ and
  $(d,g)$ attains the value $(d,g)=(22,57)$. Indeed our analysis also applies
  to show that $W$ is a non-reduced component in the case $r=6$. So there are
  at least 2 non-reduced components of $\HH(22,57)_{sc}$ with $s=4$ (and by
  Theorem~\ref{exis} one more non-reduced component for which $s=3$).
\end{remark}

\section{Non-reduced components of ${\rm H}(d,g)_{sc}$ for $s=3$}

Motivated by the Mumford's (\cite{Mu}) example of a non-reduced component, we
showed in \cite{K2} the existence of $3$-maximal families that form
non-reduced components of $\HH(d,\lfloor (d^2-4)/8 \rfloor)_{sc}$ for every $d
\ge 14$.
In \cite{K2} we also made a conjecture about non-reduced components when $s=3$.
A rough motivation for the conjecture is that the dimension of $s$-maximal
families $W(s):=W$ often seems to decrease with increasing $s$, thus making
the inclusion $W(s) \subset W(s')$ for $s'>s$ rare without having a particular
reason for such an inclusion to exist. 

\begin{conjecture} \label{conj} Let $W$ be a $3$-maximal 
  family of smooth connected, {\it linearly normal} space curves of degree $d
  > 9$ and genus $g$, whose general member $C$ is contained in a smooth cubic
  surface. Then $W$ is a non-reduced irreducible component of $ \HH(d,g)_{sc}$
  if and only if
 $$ d \geq 14, \ \ 3d - 18  \leq  g \leq  (d^2-4)/8 \ \ and  \ \
H^1(\sI_C(3))\neq 0 \, .$$
 \end{conjecture}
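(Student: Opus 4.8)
The plan is to reduce the statement to a single dimension identity together with the genuinely hard problem of recognizing when $W$ is an honest component. Because $s=3$ forces $H^1(\sN_{C/S})\simeq H^0(\sO_C(s-4))^{\vee}=0$, the connecting map $\delta$, and hence $\alpha_C$, vanishes, so $A^2=\coker\alpha_C=0$ and ${\rm D}(d,g;3)$ is smooth at $(C,S)$. The sequence \eqref{alphaN} then collapses to
\begin{equation*}
0 \to H^0(\sI_{C/S}(3)) \to A^1 \to H^0(\sN_C) \to H^1(\sI_C(3)) \to 0 .
\end{equation*}
Since the generic fibre of $pr_1$ over $(C)$ has dimension $h^0(\sI_C(3))-1=h^0(\sI_{C/S}(3))$, I would deduce $\dim W = h^0(\sN_C)-h^1(\sI_C(3))$. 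This identity is the crux: once $W$ is known to be an irreducible component, $W$ is non-reduced exactly when $\dim W<h^0(\sN_C)$, the dimension of the tangent space to $\HH(d,g)$ at $(C)$, i.e. exactly when $H^1(\sI_C(3))\ne0$. Thus the statement separates into the easy equivalences governed by this formula and the hard problem of deciding when $W$ is a component.

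For necessity I would argue as follows. If $H^1(\sI_C(3))=0$ then, as recalled in Section~2, $pr_1$ is smooth at $(C,S)$, so $W$ is automatically a generically smooth---hence reduced---component. Since $g>(d^2-4)/8$ forces $H^1(\sI_C(3))=0$ by \cite[Cor.\,17]{K1}, a non-reduced component requires both $H^1(\sI_C(3))\ne0$ and $g\le (d^2-4)/8$. The lower bound $g\ge 3d-18$ follows from the universal inequality $4d\le\dim W$ valid for any component of $\HH(d,g)_{sc}$, combined with the dimension formula for $W$ recalled in the introduction. Finally $d\ge14$ is a finite verification: for $9<d\le13$ one lists the finitely many $(d,g)$ with $3d-18\le g\le (d^2-4)/8$ and $H^1(\sI_C(3))\ne0$, and checks directly, via the collapsed sequence and the liaison/maximal-rank estimate \eqref{maxrank}, that no such $W$ is simultaneously a component and non-reduced.

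The substantial direction is sufficiency: assuming $d\ge14$, $3d-18\le g\le (d^2-4)/8$, $H^1(\sI_C(1))=0$ and $H^1(\sI_C(3))\ne0$, I must show $W$ is an irreducible component, after which non-reducedness is automatic. Suppose instead $W\subsetneq V$ with $\dim V>\dim W$. Since $W$ is $3$-maximal, the general curve $X$ of $V$ has $s(X)\ge4$, and it suffices to bound $\dim_{(X)}\HH(d,g)\le\dim W$ to reach a contradiction. I would follow Ellia \cite{E} and the $s\ge4$ arguments proved above: first constrain $s(X)$ by comparing $g$ with the maximum genera $G(d,s(X))$ of \eqref{maxgen2}--\eqref{maxgen4}; for large $s(X)$ the genus forces $X$ to be ACM, a complete intersection, or a twisted null-correlation section, whose neighbourhood dimension is computed directly. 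The decisive case is $s(X)=4$, handled through the numerical-character and Halphen-gap machinery of \cite{E1}: one shows $X$ has maximal rank (or is ACM), reads off the relevant $H^1$ from its minimal resolution via liaison, and feeds these into \eqref{maxrank} to get $h^0(\sN_X)\le\dim W$. The hypothesis $H^1(\sI_C(1))=0$ enters precisely to exclude the degenerate families (where $\f$ is a fixed component or the system is composed with a pencil) for which Ellia observed the conjecture fails.

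I expect the main obstacle to be exactly this last step: controlling $h^0(\sN_X)$ uniformly for $s(X)=4$ across the entire band $3d-18\le g\le (d^2-4)/8$. This demands the maximum-genus and numerical-character theory in the full $B$-range and extended $C$-range for quartics, which is not completely available, so the bound \eqref{maxrank} can only be pushed through where the minimal resolution of $X$ is sufficiently forced. This is why the conjecture stays open in general and can be established only in the subrange where these numerical estimates close up, with the existence of such components secured separately in a smaller band.
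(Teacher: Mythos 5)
The first thing to flag is that the statement you are proving is labelled a \emph{conjecture} in the paper and is still open: the paper itself only establishes it in subranges (Theorem~\ref{4.7} and Theorem~\ref{mainC}, together with the earlier ranges of \cite{K1} and \cite{E}), and separately supplies existence of the relevant curves in Theorem~\ref{exis}. Your reduction of the ``easy'' part is correct and agrees with the paper: $\coker\alpha_C=0$ for $s=3$, the collapsed sequence gives $\dim W=h^0(\sN_C)-h^1(\sI_C(3))$ (equivalently $\dim W+h^1(\sI_C(3))=h^0(\sN_C)$, which is exactly how the paper concludes non-reducedness once $W$ is known to be a component), and the necessity of $g\le(d^2-4)/8$ and of $g\ge 3d-18$ follows from Lemma~\ref{vani}(ii) and from $4d\le\dim W=d+g+18$. (The necessity of $d\ge14$ is not actually argued anywhere in the paper; your proposed finite check is plausible but would have to be carried out.)

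The genuine gap is in your treatment of the decisive case $s(X)=4$ of the sufficiency direction. You propose to control such generizations by showing they have maximal rank and bounding $h^0(\sN_X)$ via \eqref{maxrank}; this is not how the known cases are proved, and there is no reason to expect the numerical-character machinery to pin down the resolution of an arbitrary degree-$4$ generization uniformly across the whole band $3d-18\le g\le(d^2-4)/8$. What actually makes the proof work is Ellia's Proposition~\ref{Ellia}: under the linear-normality hypothesis $H^1(\sI_C(1))=0$ there are \emph{no} generizations $X$ with $s(X)=4$ at all, so this case is eliminated wholesale rather than estimated --- and this, not the exclusion of families where $\f_1$ is a fixed component, is the true role of linear normality. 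The remaining cases $s(X)\ge5$ are then handled by the dimension bound of Proposition~\ref{20} combined with the maximum-genus formulas \eqref{maxgen2}--\eqref{maxgen4}, with \eqref{maxrank} invoked only for a few sporadic $(d,g)$; and it is precisely because these genus comparisons only close up when $g$ is large relative to $d$ (e.g.\ $g>G(d,t)$ for some $6\le t\le 8$) that the conjecture remains open in the lower part of the band.
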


 Note that a 3-maximal family $W$ is closed and irreducible by our definition,
 and that $\dim W = d+g+18$ holds for $d > 9$. The above conjecture,
 originating in \cite{K2}, is here presented by modifications proposed by
 Ellia \cite{E} (see also \cite{DP} by Dolcetti, Pareschi), because they found
 counterexamples which heavily depended on the fact the general curves $C$ were
 {\it not} linearly normal (i.e. $H^1(\sI_C(1)) \neq 0$).

 The conjecture is known to be true in many cases. Indeed Mumford's 
 example of a non-reduced component is in the range of
 Conjecture~\ref{conj} (minimal with respect to both degree and genus). Also
 the main result by the author in \cite{K1} shows the
 conjecture provided 
\begin{equation} \label{Klegen}
g > 7 + (d-2)^2/8\ , \ \ d \geq 18 \ .
\end{equation}
Ellia shows in \cite{E} that Conjecture~\ref{conj} holds in the larger range
$g > G(d,5)$, $d \geq 21$ by first proving

 \begin{proposition} \label{Ellia} (Ellia) Let 
   $ d \geq 21$ and $g \geq 3d - 18$, let $W$ be a 3-maximal family of
   smooth connected space curves whose general curve $C$ sits on 
   a smooth surface and suppose that $ H^1(\sI_C(1)) = 0$. If $X$ is a
   generization of $C$ in $\HH(d,g)_{sc}$ satisfying $H^0(\sI_{X}(3)) = 0$,
   then $H^0(\sI_{X}(4)) = 0$.
\end{proposition}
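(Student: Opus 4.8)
The plan is to argue by contradiction: assume $H^0(\sI_X(3))=0$ while $H^0(\sI_X(4))\neq 0$, so that $s(X)=4$. Since $X$ is smooth and connected, hence irreducible, it must lie on an \emph{irreducible} quartic surface $S_4$: if $X$ were contained in a reduced component of degree $\le 3$ of a reducible or non-reduced quartic we would get $s(X)\le 3$. I would first transfer linear normality from $C$ to $X$. As $C$ is a specialization of $X$ and $h^1(\sI_{(\cdot)}(1))$ is upper semicontinuous with $h^1(\sI_C(1))=0$, the locus where $h^1(\sI(1))=0$ is open and contains $C$, hence is dense in the irreducible subset whose general member is $X$; therefore $H^1(\sI_X(1))=0$ as well.

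The core is a dimension count. Let $\sQ\subseteq\HH(d,g)_{sc}$ be the locus of curves with $s=4$; each such curve lies on a quartic, so $\sQ$ is dominated through $pr_1$ by the Hilbert-flag scheme ${\rm D}(d,g;4)$. On the part where the quartic $S_4$ is smooth and $X$ is not a complete intersection in it, the infinitesimal Noether--Lefschetz theorem gives $\coker\alpha_X=0$, so ${\rm D}(d,g;4)$ is smooth at $(X,S_4)$ with $\dim A^1=g+33$ by the dimension formula following \eqref{alphaN} (take $s=4$). Since $pr_1$ does not raise dimension, this good part of $\sQ$ has dimension $\le g+33$. On the other hand $C$ is the \emph{general} curve of the irreducible family $W$, and $\dim W=d+g+18>g+33$ precisely because $d\ge 21>15$; hence $\overline{\sQ}_{\mathrm{good}}\cap W$ is a proper closed subset of $W$, which the general $C$ avoids. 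A generization $X$ with $s(X)=4$ lying in the good locus would force $C\in\overline{\{X\}}\subseteq\overline{\sQ}_{\mathrm{good}}$, a contradiction. This is where linear normality is essential: it pins $X$ to the locus on which the bound $\dim\le g+33$ is valid and excludes the non--linearly-normal curves on quartics that furnish the known counterexamples to the unmodified conjecture.

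It remains to dispose of the two exceptional strata, and I expect the singular-quartic stratum to be the genuine obstacle, since there the clean flag-scheme estimate $A^2=0$ is unavailable. For the complete-intersection stratum one checks that a $(4,f)$ complete intersection has invariants $(d,g)=(4f,\,1+2f^2)$, whose genus $2f^2+1$ exceeds $g_2=\lfloor(d^2-4)/8\rfloor=2f^2-1$; so in the relevant range $g\le g_2$ no such curve can specialize to the linearly normal general $C$, while the case $g>g_2$ is handled directly, since then $H^1(\sI_C(3))=0$ by \cite[Cor.\,17]{K1} transfers to $X$ and makes $pr_1$ smooth already at level $3$. For curves forced onto singular quartics, and for non-ACM curves whose character falls short of the maximal one, I would run Ellia's numerical-character analysis \cite{E1} on a general plane section: split according to whether the character is maximal, invoke \cite[Lem.\,VI.2]{E1} to conclude that a non-maximal character forces $X$ to be ACM, and bound $h^0(\sN_X)$ through the maximal-rank formula \eqref{maxrank}, comparing with $\dim W=d+g+18$ to finish. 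Here the hypotheses $g\ge 3d-18$ and $H^1(\sI_X(1))=0$ enter once more, to rule out the borderline characters and to keep the speciality $h^1(\sO_X(n))$ under control.
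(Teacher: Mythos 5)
The paper does not actually prove this statement: it quotes Ellia's result and refers to \cite[Prop.\;VI.2]{E} for the proof, so your attempt has to be judged against what a complete argument would require rather than against a proof in the text. The parts you do carry out are sound. Semicontinuity correctly transfers $H^1(\sI_X(1))=0$ to the generization; on the stratum of curves lying on a \emph{smooth} quartic and not a complete intersection in it, infinitesimal Noether--Lefschetz gives $\coker\alpha_X=0$, so ${\rm D}(d,g;4)$ is smooth of dimension $g+33$ there, and $g+33<\dim W=d+g+18$ for $d\ge 21$ makes the closure of that stratum meet $W$ in a proper closed subset which the general $C$ avoids; and the complete intersection stratum is correctly dismissed since a $(4,f)$ complete intersection has $g=d^2/8+1>\lfloor(d^2-4)/8\rfloor$, which forces $H^1(\sI_C(3))=0$ and hence excludes proper generizations altogether.

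The gap is the one you flag yourself, and it is not a removable technicality: curves lying only on \emph{singular} integral quartics (in particular curves meeting the singular locus, where $\sN_{X/F}$ need not be $\omega_X\otimes\omega_F^{-1}|_X$ and the vanishing of the obstruction space is unavailable) are exactly where the content of the proposition sits. Your proposed fallback cannot close this in the stated range. The numerical-character lemmas of \cite{E1} that you invoke, including \cite[Lem.\;VI.2]{E1}, are statements about curves whose genus is close to the maximal genus $G(d,s(X))$; here $s(X)=4$ gives $G(d,4)$ of order $d^2/8$, while the hypothesis only guarantees $g\ge 3d-18$, so over most of the range the character is nowhere near maximal and those lemmas say nothing. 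Likewise a crude parameter count in the style of Proposition~\ref{20} gives $\dim V\le 34+\max\{\,d^2/4-g,\ d^2/8,\ g\,\}$, which beats $d+g+18$ only when $g$ is of order $d^2$; this is precisely why the paper's Theorem~\ref{4.7}, which runs such counts for $s\ge 5$, must assume $g>d^2/10-d/2+18$ and \emph{relies on} Ellia's proposition to dispose of $s(X)=4$ --- the proposition cannot be re-derived from those same estimates. A telling symptom: the hypothesis $H^1(\sI_C(1))=0$ is never used in any stratum you actually handle, yet it was added to the conjecture precisely to exclude known counterexamples of Ellia and Dolcetti--Pareschi; so the curves it is meant to rule out must live in the part of the argument you have left open, and a genuinely different input (exploiting linear normality of $X$ itself, not just a dimension count) is needed there.
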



See \cite[Prop.\;VI.2]{E} for a proof. More recently Nasu proves (and
reproves) a part of the conjecture by showing that the cup-product (i.e the
primary obstruction) 
is nonzero if $h^1(\sI_C(3))=1$ (\cite{N}).

When we try to show that generizations $X$ of a curve $C$ with $s(C)=s-1$ do
not exist, the hard part is usually the case $s(X)=s$. The non-existence of
such $X$ are taken care of by Proposition~\ref{Ellia} for $s=4$. Therefore
combining Ellia's result with semi-continuity arguments when $s(X) > s$, we
can extend the range where Conjecture~\ref{conj} holds in a good number of
cases. This is what we do in the Theorems~\ref{mainC} and \ref{4.7}. 
Recalling that we can associate a curve $C$ on $S$ and its corresponding
invertible sheaf $\sO_S(C)$ with a $7$-tuple of integers $(\delta,m_1,..,m_6)$
satisfying \eqref{numpic} below (by blowing up $\proj{2}$ in six general
points in the usual way, cf. \cite{GP2}), we first remark (cf. \cite[Lem.\;16
and Cor.\;17]{K1})

\begin{lemma} \label{vani} With notations as above it holds \\[-3mm]

  {\rm (i)} $d = 3\delta - \sum_{i=1 }^{6} \, m _{i} ~ , ~ ~ g = { \delta -1
    \choose 2} - \sum_{i=1 }^{6} \,{ m_{i} \choose 2} \, . $
\\[-3mm]

  {\rm (ii)} If \
  $g > (d^2-4)/8 $, then $ \ H^1(\sI_C(3)) = 0 \, ,$ whence $C$ is unobstructed. \\[-4mm]

  {\rm (iii)} If \ $ d \geq 14$ and $g \geq 3d-18$, then \ $ H^1(\sI_C(3)) \neq
  0 \ {\rm and} \ H^1(\sI_{C}(1))= 0 \ \Leftrightarrow \ 1 \leq m_6 \leq 2 \, .$
\end{lemma}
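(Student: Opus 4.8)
The plan is to dispatch (i) and (ii) quickly and to concentrate on the equivalence in (iii). Throughout I identify the smooth cubic $S$ with the blow-up $\pi\colon S \to \proj{2}$ of the six general points and write $\Pic(S)=\ZZ L\oplus\bigoplus_{i=1}^{6}\ZZ e_i$, where $L=\pi^{*}\sO_{\proj{2}}(1)$ and the $e_i$ are the exceptional curves, so that $L^2=1$, $e_i^2=-1$, $L\cdot e_i=0=e_i\cdot e_j$ ($i\ne j$), the hyperplane class is $H=-K_S=3L-\sum_i e_i$, and $C\equiv \delta L-\sum_i m_i e_i$. For (i) I would simply evaluate $d=C\cdot H$ and $2g-2=C\cdot(C+K_S)$ by adjunction; inserting the intersection numbers (and $C\cdot K_S=-d$) reproduces the two displayed formulas after rewriting $\tfrac12(\delta^2-3\delta-\sum m_i^2+\sum m_i)$ as $\binom{\delta-1}{2}-\sum\binom{m_i}{2}$. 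For (ii) the vanishing $H^1(\sI_C(3))=0$ for $g>(d^2-4)/8$ is \cite[Cor.\,17]{K1}; granting it, $pr_1$ is smooth at $(C,S)$ by \cite[Lem.\,A10]{K1}, while ${\rm D}(d,g;3)$ is itself smooth at $(C,S)$ because $\coker\alpha_C=0$ for $s=3$ (the connecting map vanishes, as recalled in Section~2). A smooth morphism with regular source has regular image point, so $\HH(d,g)$ is smooth at $(C)$, i.e.\ $C$ is unobstructed.

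For (iii) I first translate both conditions into cohomology of line bundles on $S$. Order $m_1\ge\cdots\ge m_6\ge 0$. Tensoring $0\to\sI_S\to\sI_C\to\sI_{C/S}\to 0$ by $\sO_{\PP^3}(n)$ and using $\sI_S(n)\cong\sO_{\PP^3}(n-3)$ with $H^1(\sO_{\PP^3}(n-3))=H^2(\sO_{\PP^3}(n-3))=0$ for $n\in\{1,3\}$ gives $H^1(\sI_C(n))\cong H^1(S,\sO_S(nH-C))$. Serre duality on $S$ with $K_S=-H$ then yields $H^1(\sI_C(1))\cong H^1(\sO_S(C-2H))^{\vee}$ and $H^1(\sI_C(3))\cong H^1(\sO_S(C-4H))^{\vee}$, where $C-2H\equiv(\delta-6)L-\sum(m_i-2)e_i$ and $C-4H\equiv(\delta-12)L-\sum(m_i-4)e_i$. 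The assertion thus becomes: $H^1(\sO_S(C-4H))\ne 0$ and $H^1(\sO_S(C-2H))=0$ if and only if $1\le m_6\le 2$.

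The tool I would use is to strip off the exceptional lines. For $D\equiv aL-\sum b_ie_i$ one has $D\cdot e_i=b_i$ and an exact sequence $0\to\sO_S(D-e_i)\to\sO_S(D)\to\sO_{\proj{1}}(b_i)\to 0$; when $b_i=-1$ the quotient is cohomology-free and $H^{\bullet}$ is unchanged, whereas when $b_i\le -2$ it injects a copy of $H^1(\sO_{\proj{1}}(b_i))=k^{\,-b_i-1}$ that survives in $H^1(\sO_S(D))$ once the relevant $H^2$ vanishes. Iterating over all $e_i$ with $b_i<0$ reduces to a class meeting every $e_i$ non-negatively; the hypotheses $d\ge 14$ and $g\ge 3d-18$ force $\delta$ large enough that this residual class also meets the remaining finitely many lines $L-e_i-e_j$ and $2L-\sum_{k\ne i}e_k$ non-negatively, hence is nef, and so has vanishing $H^1$ by Kawamata--Viehweg on the del Pezzo surface $S$ (a nef $D$ satisfies $\sO_S(D)=\sO_S(K_S+(D-K_S))$ with $D-K_S$ ample). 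Carrying this out for $C-2H$ gives $H^1(\sO_S(C-2H))=0\Leftrightarrow m_6\ge 1$ (for $m_6=0$ the curve $e_6$ meets $C-2H$ in $-2$ and contributes), and for $C-4H$ gives $H^1(\sO_S(C-4H))\ne 0\Leftrightarrow m_6\le 2$ (then $(C-4H)\cdot e_6=m_6-4\le -2$ contributes, while for $m_6\ge 3$ every negative intersection equals $-1$ and the residual class is nef). Intersecting the two ranges produces exactly $1\le m_6\le 2$.

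The delicate point, and the main obstacle, is the final step: confirming that after peeling the exceptional curves the residual class is \emph{genuinely} nef, and that each $k^{\,-b_i-1}$ coming from a factor $\sO_{\proj{1}}(b_i)$ with $b_i\le -2$ truly survives into $H^1(\sO_S(D))$ rather than being absorbed by the connecting map into $H^2$. This is precisely where the numerical hypotheses $d\ge 14$ and $g\ge 3d-18$ must be invoked, both to control $\delta$ and to kill the obstructing $H^2$-terms, and the careful case-by-case bookkeeping over $m_6\in\{0,1,2,\ge 3\}$ is the content carried out in \cite[Lem.\,16 and Cor.\,17]{K1}.
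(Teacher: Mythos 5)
The paper gives no proof of this lemma: it is recalled verbatim from \cite[Lem.\,16 and Cor.\,17]{K1} (the citation immediately preceding the statement), which is exactly the reference you yourself fall back on for the key vanishing in (ii) and for the final case-by-case bookkeeping in (iii). Your adjunction computation for (i), the smoothness-of-$pr_1$ argument for unobstructedness in (ii), and the Serre-duality reduction of (iii) to $H^1(\sO_S(C-2H))$ and $H^1(\sO_S(C-4H))$ followed by peeling off exceptional curves all follow the standard route consistent with the cited source; the genuinely delicate step (nefness of the residual class, using $\delta\ge m_1+m_2+m_3$, and vanishing of the obstructing $H^2$'s) is correctly identified but, as you acknowledge, ultimately deferred to \cite{K1}, just as the paper does.
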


By Lemma~\ref{vani} (ii) the conditions of Conjecture~\ref{conj} are necessary
for $W$ to be a non-reduced component, and for the converse, and we may suppose
$m_6 = 1$ or $2$ by (iii).  If $m_6 = 1$ we recall

\begin {theorem} \label{mainC} Let $W$ be a 3-maximal family of smooth
  connected space curves, whose general member sits on a smooth cubic surface
  and corresponds to the $7$-tuple $(\delta,m_1,..,m_6)$ satisfying  \\[-3mm]
\begin{equation} \label{numpic} 
  \delta \geq m_1 \geq.. \geq m_6 \ \ and \ \ \delta \geq  m_1 + m_2 +m_3 \, . 
\end{equation}  \\[-5mm]
%
%
  Then $W$ is a non-reduced irreducible component of $\HH(d,g)_{sc}$
  provided \\[-3mm]

\hspace{0.2cm} a)  \ $m_6 = 1, \ m_5 \geq 6, \ d \geq 35$ \
and \ $(\delta, m_1,..,m_6) \neq (\lambda+18,\lambda+6,6,..,6,1)$ for any
$\lambda \geq 2$,
or  \\[-4mm]

\hspace{0.2cm} b) \ $m_6 = 1, m_5 = 5, m_ 4 \geq 7, \ d
\geq 35$ \ and \ $(\delta,
 m_1,..,m_6) \neq (\lambda+21,\lambda+7,7,..,7,5,1)$ for $\lambda \geq 2$.
\end{theorem}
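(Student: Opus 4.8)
The plan is to split the assertion into the claim that $W$ is an irreducible component and the claim that it is non-reduced along its generic point, the second being essentially free once the first is in hand. For the non-reducedness, note first that since $s=3$ the connecting map $\delta$ vanishes, so $A^2=\coker\alpha_C=0$ and ${\rm D}(d,g;3)$ is smooth at $(C,S)$ with $\dim A^1=d+g+18$. For $d>9$ the class $3H-C$ on the cubic $S$ has $(3H-C)\cdot H=9-d<0$ and so is not effective, giving $H^0(\sI_{C/S}(3))=H^0(\sO_S(3H-C))=0$; since the cubic through $C$ is unique we also get $\dim W=\dim A^1$. Substituting $A^2=0$ and $H^0(\sI_{C/S}(3))=0$ into \eqref{alphaN} leaves $0\to A^1\to H^0(\sN_C)\to H^1(\sI_C(3))\to 0$, so $h^0(\sN_C)=\dim W+h^1(\sI_C(3))$. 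As a) and b) force $d\ge 14$ and $g\ge 3d-18$ (the latter is in any case necessary for $W$ to be a component), the hypothesis $m_6=1$ and Lemma~\ref{vani}(iii) yield $H^1(\sI_C(3))\ne 0$ and $H^1(\sI_C(1))=0$. Hence $h^0(\sN_C)>\dim W$, and once $W$ is known to be a component this forces it to be non-reduced.

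It remains to prove that $W$ is a component. Suppose not; then some generization $X$ of $C$ lies on an irreducible $V\subset\HH(d,g)_{sc}$ with $\dim V>\dim W$, and $3$-maximality of $W$ lets me assume $s(X)>3$ for general $X$. Having checked $d\ge 35\ge 21$, $g\ge 3d-18$ and $H^1(\sI_C(1))=0$ above, I can apply Proposition~\ref{Ellia}: a generization with $H^0(\sI_X(3))=0$ also satisfies $H^0(\sI_X(4))=0$, so $s(X)=4$ is impossible and I may assume $s(X)\ge 5$. If $g>G(d,5)$ this is already absurd, since $s(X)=5$ forces $H^0(\sI_X(4))=0$ and hence $g\le G(d,5)$, and the cases $s(X)\ge 6$ are even more restrictive because $G(d,s(X))\le G(d,5)$.

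In the remaining window $3d-18\le g\le G(d,5)$ I would argue as in the $s\ge 4$ cases of the proof of Theorem~\ref{mainQs}: use the Halphen-gap analysis of \cite{E1}, together with the maximum-genus estimates of \cite{Har}, to pin down the numerical character of $X$ --- concluding that $X$ is either ACM or of maximal rank arising from a controlled double liaison --- and then bound $\dim V\le\dim_{(X)}\HH(d,g)\le h^0(\sN_X)$ via the maximal-rank formula \eqref{maxrank}. The conditions $m_5\ge 6$ in a) and $m_5=5,\,m_4\ge 7$ in b) are exactly what places $g$ in a range where these cohomological bounds give $h^0(\sN_X)\le\dim W$, contradicting $\dim V>\dim W$. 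I expect the main obstacle to be precisely this last step: extracting enough control of $H^1_{*}(\sI_X)$ and $H^1_{*}(\sO_X)$ to make \eqref{maxrank} yield $h^0(\sN_X)\le\dim W$ uniformly over the window, and correctly excising the two borderline tuples $(\lambda+18,\lambda+6,6,\ldots,6,1)$ and $(\lambda+21,\lambda+7,7,\ldots,7,5,1)$, which are the families where $g$ meets $G(d,5)$ and a genuine larger generization with $s(X)=5$ does exist, so that $W$ is not a component.
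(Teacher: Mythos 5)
Your proposal correctly disposes of the easier parts of the statement, and these agree with what the paper does elsewhere: for $s=3$ one has $\coker\alpha_C=0$ and $\dim W=\dim A^1=d+g+18$, so \eqref{alphaN} gives $h^0(\sN_C)=\dim W+h^1(\sI_C(3))$, and Lemma~\ref{vani}(iii) turns $m_6=1$ into $h^1(\sI_C(3))\neq 0$ and $h^1(\sI_C(1))=0$; hence non-reducedness is automatic once $W$ is a component. Likewise the reduction via Proposition~\ref{Ellia} to generizations $X$ with $s(X)\ge 5$ is sound. Note, however, that the paper itself does not prove Theorem~\ref{mainC}: it explicitly recalls it and refers to the appendix of \cite[Thm.\;A.3]{KleOtt} for the proof, so there is no in-paper argument to compare against beyond the surrounding machinery.

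The genuine gap is that the entire content of hypotheses a) and b) --- $m_5\ge 6$, respectively $m_5=5,\ m_4\ge 7$, and the two excluded one-parameter families --- lies in the window $3d-18\le g\le G(d,5)$, and there you only sketch a strategy (``I would argue as in the $s\ge 4$ cases\ldots I expect the main obstacle to be precisely this last step''). That sketch does not scale: in the proof of Theorem~\ref{mainQs} the Halphen-gap and maximal-rank analysis via \eqref{maxrank} is carried out for \emph{four isolated} pairs $(d,g)$, each requiring an explicit resolution or liaison computation, whereas here the window is an infinite two-parameter family of $(d,g)$, so one needs a uniform bound on $\dim V$ for all components $V$ with $s(V)\ge 5$ in terms of the $7$-tuple --- this is exactly what the cited appendix (following \cite[Sect.\,4]{K1}) supplies, using the conditions on $m_4,m_5$ to control $h^0(\sI_{C/S}(v))$ and $h^1(\sO_C(v))$ by semicontinuity. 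You never actually invoke $m_5\ge 6$ or $m_5=5,\ m_4\ge 7$ in any inequality, and your explanation of the excluded tuples (that $g$ meets $G(d,5)$ there) is a conjecture rather than a verification. As it stands the argument establishes the theorem only in the subrange $g>G(d,5)$, which is Ellia's result, not the stated one.
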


For a proof, see the appendix to \cite[Thm.\;A.3]{KleOtt} by the first author.
For $m_6 = 2$,  $m_5 \ge 3$ see \cite{N}. \\[1mm]
It is also possible to use the idea of \cite[Sect.\,4]{K1} to
determine bounds for $\dim V$ where $V \supset W$ and $\dim V > \dim W$. This
is done in \cite[Sect.\,4]{KleOtt}, leading to \cite[Prop.\;A.7]{KleOtt} which
we will generalize and completely prove.
Note that by Proposition~\ref{Ellia} we can skip considering components with
$s(V)=4$.

\begin{theorem} \label{4.7} Let $W$ be a 3-maximal family of smooth
  connected space curves, whose general member is linearly normal and sits on
  a smooth cubic surface. If
\begin{equation}  \label{range4.7}
g   >   \max \,\{ \,{\frac{d ^{2} }{10}} - {\frac{d}{2}} +  18  ,\,  
G(d,t)\,\} \, ,  ~ d  \ge t^2-2t+2 \, ,
\end{equation}
for {\sf some} $t$ satisfying $6 \le t \le 8$, then $W$ is an irreducible
component of \ $\HH(d,g)_{sc}$. Moreover, $W$ is non-reduced if and only if
$H^1(\sI_C(3))\neq 0$. In particular Conjecture~\ref{conj} holds in the range
\eqref{range4.7}, e.g. if 
\begin{equation}  \label{range4.8}
g   >  {\frac{d ^{2} }{10}} - {\frac{d}{2}} +  18 \, ,  ~ d  \ge   54 \, .
\end{equation}
\end{theorem}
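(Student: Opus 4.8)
The plan is to show $W$ is a component by excluding every proper generization, and then to read off non-reducedness directly from \eqref{alphaN}. Suppose $W$ is \emph{not} an irreducible component. Since $W$ is $3$-maximal, it is properly contained in a component $V$ with $\dim V > \dim W$ whose general curve $X$ satisfies $s(X) \ge 4$, and it suffices to contradict this. First I would note that \eqref{range4.7} forces $g > 3d-18$ (the quadratic $d^2-35d+360$ has negative discriminant, so $d^2/10-d/2+18 > 3d-18$ for all $d$) and that $d \ge t^2-2t+2 \ge 26$, so Proposition~\ref{Ellia} applies: as $C$ is linearly normal, any generization with $s(X)>3$ in fact has $s(X) \ge 5$. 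This disposes of the delicate quartic case $s(X)=4$ for free.

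Next I would exploit the two bounds in \eqref{range4.7} separately. Because $d \ge t^2-2t+2$ places us where $G(d,t)$ is given by \eqref{maxgen2}--\eqref{maxgen4}, the bound $g > G(d,t)$ rules out $s(X) \ge t$ outright: no smooth connected curve of degree $d$ and genus $g$ can avoid all surfaces of degree $t-1$. Hence only the finitely many values $\sigma := s(X) \in \{5,\dots,t-1\} \subseteq \{5,6,7\}$ remain. For each such $\sigma$ I would bound $\dim V \le \dim_{(X)}\HH(d,g) \le h^0(\sN_X)$ by the method of \cite[Sect.\,4]{K1} and \cite[Sect.\,4]{KleOtt}, estimating $h^0(\sN_X)$ through the postulation of $X$ and the cohomological formula \eqref{maxrank}. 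The resulting upper bound for $\dim V$ decreases with $\sigma$, so the case $\sigma=5$ (curves on a quintic, whence the factor $1/(2\cdot 5)$) is binding and yields $\dim V < \dim W = d+g+18$ exactly when $g > d^2/10 - d/2 + 18$; the thresholds for $\sigma = 6,7$ are strictly weaker and are subsumed. This contradicts $\dim W < \dim V$, so $W$ is a component.

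For non-reducedness, since $s=3$ one has $\coker\alpha_C = 0$ (as $\delta=0$) and $H^0(\sI_{C/S}(3))=0$ for the general curve (because $d>9$ makes $3H-C$ non-effective on the cubic), so \eqref{alphaN} collapses to $0 \to A^1 \to H^0(\sN_C) \to H^1(\sI_C(3)) \to 0$, giving $h^0(\sN_C) = \dim W + h^1(\sI_C(3))$. Having shown $W$ is a component, it is non-reduced precisely when the tangent space $h^0(\sN_C)$ exceeds $\dim W$, i.e.\ precisely when $H^1(\sI_C(3)) \ne 0$. Together with Lemma~\ref{vani}(ii),(iii), which makes the numerical hypotheses of Conjecture~\ref{conj} necessary, this establishes the conjecture throughout \eqref{range4.7}. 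The cleaner corollary \eqref{range4.8} then follows from a finite numerical comparison: for $d \ge 54$ one checks that the quadratic term dominates $G(d,t)$ for a suitable $t \in \{6,7,8\}$ (using \eqref{maxgen2}--\eqref{maxgen4}, with the boundary curves of maximal genus handled by the explicit dimension count as in the proof of Theorem~\ref{mainQs}), so that the maximum in \eqref{range4.7} is attained by $d^2/10-d/2+18$.

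The main obstacle is the $\sigma=5$ dimension estimate. Curves on quintic surfaces need be neither arithmetically Cohen--Macaulay nor of maximal rank, so \eqref{maxrank} does not apply verbatim; one must bound $h^0(\sN_X)$ uniformly over \emph{all} such $X$ of the prescribed $(d,g)$, and it is the sharpest form of this estimate that produces the threshold $d^2/10 - d/2 + 18$. By contrast the cases $\sigma=6,7$ are lighter, since the genus bound already forces $X$ into very restricted families (ACM curves, complete intersections, or low-degree liaison classes, treated exactly as in Theorem~\ref{mainQs}); the quintic case carries the real weight of the argument.
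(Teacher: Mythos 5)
Your skeleton matches the paper's: Proposition~\ref{Ellia} eliminates $s(X)=4$ (and your check that \eqref{range4.7} forces $g>3d-18$, hence applicability of Ellia's result, is correct), the bound $g>G(d,t)$ eliminates $s(X)\ge t$, the remaining cases $s(X)\in\{5,\dots,t-1\}$ are to be excluded by a dimension count, and non-reducedness is read off from \eqref{alphaN} exactly as in the paper (for $s=3$ one indeed has $\coker\alpha_C=0$ and $\dim W+h^1(\sI_C(3))=h^0(\sN_C)$). But there is a genuine gap at the load-bearing step, and you flag it yourself: you propose to bound $\dim V$ via $h^0(\sN_X)$ and \eqref{maxrank}, then concede that curves on quintics need be neither ACM nor of maximal rank so that \eqref{maxrank} does not apply, and you never supply the uniform estimate that, in your words, ``produces the threshold $d^2/10-d/2+18$''. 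That estimate is precisely Proposition~\ref{20} (quoted from \cite[Prop.\,4.4]{KleOtt}), which bounds the dimension of \emph{any} component whose general curve lies on an integral degree-$s$ surface with $d>s^2$ by ${s+3 \choose 3}-1+\max\{d^2/s-g,\ d^2/(2s),\ (4-s)d+g-1+h^0(\sO_C(s-4))\}$, with no maximal-rank or postulation hypothesis; comparing each of the three terms (for $s=5$, where $h^0(\sO_X(1))=4+h^1(\sI_X(1))=4$ by linear normality) against $\dim W=d+g+18$ is exactly what yields the threshold. Without this, or an equivalent bound valid for all curves of the given $(d,g)$ on quintics, the argument does not close.

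Two further points. Your claim that the cases $\sigma=6,7$ are ``strictly weaker and subsumed'' by $\sigma=5$ is not accurate: in the paper each $\sigma$ is handled by a separate application of Proposition~\ref{20}, and for $\sigma=6,7$ one additionally needs Clifford's theorem to control $h^0(\sO_X(\sigma-4))$ in the third term of the maximum, together with explicit evaluations of $G(d,t)$ in the extended $C$-range for a few small $d$ (e.g.\ $37\le d\le 39$); these cases are not reduced to liaison classifications as in Theorem~\ref{mainQs}. Likewise \eqref{range4.8} is not purely a finite numerical comparison: the boundary value $d=57$, $g=G(57,8)=315$ escapes the inequality $G(d,8)\le d^2/10-d/2+18$ and is settled by an explicit linkage to a plane septic and the resulting resolution of $I(X)$, giving $\dim V=285<390=\dim W$ (here \eqref{maxrank} \emph{does} apply, to the ACM curve so obtained). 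Your non-reducedness argument is correct and coincides with the paper's.
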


We have $G(d,6) \ge \frac{d ^{2} }{10} - {\frac{d}{2}} + 18 $ if and only if
$ d \le 74$. This is one of the reasons for proving \eqref{range4.7} not only
for $t = 6$, but also $t=7$ and 8 since it enlarges the range where the
conjecture
holds. 
Note also that Ellia proves the conjecture for $g > G(d,5)= {\frac{d ^{2}
  }{10}} + {\frac{d}{2}} + 1 - \frac{2r(5-r)}{5}$, 
so the improvement in Theorem~\ref{4.7} is only of order $d$. The improvement in
Theorem~\ref{mainC} is more substantial.

In the proof of Theorem~\ref{4.7} we will need (see \cite[Prop.\;4.4]{KleOtt}
for a proof):

\begin {proposition} \label{20} Let V be an irreducible component of
  $\HH(d,g)_{sc}$ whose general curve $C$ sits on some integral surface $F$ of
  degree $s \geq 4$. If 
  $d > s^2$, then
\begin{equation*}
  \dim\, V  \le \  {s+3 \choose 3}- 1 + \max \,\{ \,{\frac{d ^{2} }{s}}-g\,
  , \ {\frac{d ^{2} }{2s}}\, , \ (4-s)d+g-1+h^0(\sO_C(s-4)) \, \}\, .
\end{equation*}
\end{proposition}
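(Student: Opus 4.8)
The plan is to bound $\dim V$ through the Hilbert-flag scheme, separating the freedom to move the surface $F$ from the freedom to move $C$ inside $F$. Let $\Sigma \subset \HH(d,g)_{sc} \times \HH(s)$ be the incidence locus of pairs $(C,F)$ with $C \subset F$, and let $\Sigma'$ be an irreducible component dominating $V$ under the first projection $pr_1$, chosen through the generic point of a general fibre. Since the general curve of $V$ lies on an integral degree-$s$ surface, $V = \overline{pr_1(\Sigma')}$, and the fibre $pr_1^{-1}(C)\cap \Sigma = \PP(H^0(\sI_C(s)))$ is irreducible of dimension $h^0(\sI_C(s)) - 1 \ge 0$, so that it lies entirely in $\Sigma'$; hence $\dim V = \dim \Sigma' - h^0(\sI_C(s)) + 1$. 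On the other hand $pr_2$ maps $\Sigma'$ into $\HH(s) \simeq \proj{{s+3 \choose 3}-1}$, whence $\dim \Sigma' \le {s+3 \choose 3} - 1 + \phi$, where $\phi$ bounds the dimension of the family of degree-$d$, genus-$g$ curves lying on a fixed general surface $F$ of the family.

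Next I would bound $\phi$ by $h^0(\sN_{C/F})$. For the general pair $F$ is a Gorenstein hypersurface with $\omega_F = \sO_F(s-4)$ and (generically) smooth along the smooth curve $C$, so $C$ is a Cartier divisor, the curves on $F$ in its class form $|C|$, and from $0 \to \sO_F \to \sO_F(C) \to \sN_{C/F} \to 0$ with $H^1(\sO_F)=0$ (which holds for every degree-$s$ hypersurface) one gets $\phi \le \dim|C| = h^0(\sO_F(C)) - 1 \le h^0(\sN_{C/F})$. Riemann--Roch on $C$ together with the adjunction identity $\sN_{C/F} \otimes \sO_C(s-4) \simeq \omega_C$ then gives $h^0(\sN_{C/F}) = (4-s)d + g - 1 + h^0(\sO_C(s-4))$, which is exactly the third entry of the maximum. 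Combining the displays and using $h^0(\sI_C(s)) \ge 1$ yields $\dim V \le {s+3 \choose 3} - 1 + (4-s)d + g - 1 + h^0(\sO_C(s-4))$, and since the third entry is at most the maximum this already proves the stated inequality.

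The first two entries arise as alternative estimates of the same quantity $h^0(\sN_{C/F}) = h^0(\sO_C(C))$, which are the ones one actually wants to invoke when $h^0(\sO_C(s-4))$ is large. With $H^2 = s$ and $C\cdot H = d$ on $F$, the Hodge index theorem gives $C^2 \le d^2/s$. If $\sO_C(C)$ is non-special, Riemann--Roch gives $h^0(\sO_C(C)) = C^2 + 1 - g \le d^2/s - g + 1$, the first entry; if it is special, Clifford's theorem gives $h^0(\sO_C(C)) \le C^2/2 + 1 \le d^2/(2s) + 1$, the second. Thus in every case $h^0(\sN_{C/F})$ is dominated by the maximum of the three expressions, and the proposition follows.

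The main obstacle I anticipate is the hypothesis that $F$ is only integral, not smooth: the clean evaluation of $h^0(\sN_{C/F})$, the use of adjunction, and the identification of the on-surface family with $|C|$ all presuppose that $F$ is smooth along $C$ and that $C$ is Cartier. I would address this by a generic-smoothness argument, arguing that for the general member of $\Sigma'$ the smooth curve $C$ meets the at most one-dimensional singular locus of $F$ in a controlled way, or by passing to the locally free quotient governing $\sN_{C/F}$ and checking that the degree and Euler-characteristic computations survive. The remaining delicate point is purely bookkeeping: tracking the additive constants so that the three estimates land precisely on $d^2/s - g$, $d^2/(2s)$ and $(4-s)d + g - 1 + h^0(\sO_C(s-4))$ rather than merely on their leading terms, which in the non-special case requires the strict form of the Hodge index inequality when $C$ is not numerically proportional to $H$.
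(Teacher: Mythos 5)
The paper does not actually prove Proposition~\ref{20}; it refers to \cite[Prop.\;4.4]{KleOtt}, and your skeleton (incidence correspondence, $\dim V\le\dim\HH(s)+\dim(\text{fibre of }pr_2)$, fibre bounded via the normal sheaf $\sN_{C/F}$, then Riemann--Roch/adjunction, Hodge index and Clifford) is indeed the right one. But there is a genuine gap precisely where the proposition has content beyond the smooth-surface case. Your main line asserts that adjunction gives $h^0(\sN_{C/F})=(4-s)d+g-1+h^0(\sO_C(s-4))$ and that ``this already proves the stated inequality''; that identity holds only when $C$ is Cartier on $F$ (e.g.\ $F$ smooth along $C$). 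For $C$ smooth on an integral $F$, $\sN_{C/F}=\mathcal{H}om(\sI_{C/F},\sO_C)$ is still a line bundle on $C$, but its degree is $2g-2-(s-4)d+\deg Z$, where $Z\subset C$ is the scheme where $F$ fails to be smooth along $C$; hence $h^0(\sN_{C/F})$ can strictly exceed the third entry of the max. The first two entries are not ``alternative estimates one invokes when $h^0(\sO_C(s-4))$ is large'': they are the bounds one must fall back on exactly when adjunction fails, obtained by passing to a desingularization $\tilde F$ (where Hodge index gives $\tilde C^2\le d^2/s$) and then applying Riemann--Roch or Clifford according to whether the relevant line bundle is special. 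Your proposed repair --- a generic-smoothness argument showing the general $F$ is smooth along the general $C$ --- is not available: nothing forces the unique degree-$s$ surface through the general curve of $V$ to be smooth along it, and if it were, the hypothesis ``integral'' could be replaced by ``smooth along $C$'' and the first two entries would be superfluous.

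A second, smaller but still real, problem is the additive constant in entries one and two. Your chain gives $\dim V\le {s+3 \choose 3}-1+h^0(\sN_{C/F})$ with $h^0(\sN_{C/F})\le C^2+1-g\le d^2/s+1-g$ (non-special case) or $\le C^2/2+1\le d^2/(2s)+1$ (Clifford), i.e.\ each of the first two entries comes out one unit too large. You flag this as ``bookkeeping,'' but the stated constants are used sharply in the proof of Theorem~\ref{4.7} (the step ``$2g\le 36+\lfloor d^2/5\rfloor-d$ contradicts $g>d^2/10-d/2+18$'' fails if the first entry is weakened to $d^2/s-g+1$), and the strict Hodge index inequality only yields $C^2\le (d^2-1)/s$ when $C\not\equiv(d/s)H$, which does not save a full unit. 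So as written your argument proves the proposition only with the first two entries enlarged by $1$; an extra saving of one dimension (for instance, from the fact that in the case where those entries are needed the image of $V$ in $\HH(s)$ consists of singular surfaces, hence lies in the discriminant hypersurface) must be supplied and is absent from the proposal.
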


\begin{proof} [Proof (of  Theorem~\ref{4.7})]
  To see that $W$ is an irreducible component, we suppose there exists a
  component $V$ of $\HH(d,g)_{sc}$ satisfying $W \subset V$ and $\dim W < \dim
  V$. Then $s(V) \ge 4$ by the definition of a 3-maximal family, whence $s(V)
  \ge 5$ by Proposition~\ref{Ellia}. Firstly suppose $g > G(d,6)$ and $d \ge
  26$. It follows that the general curve $X$ of $V$ satisfies $s:=s(X)=5$. To
  get a contradiction we will use Proposition~\ref{20} for $s = 5$ and the
  fact $\dim W =d+g+18$. Indeed since $X$ is a generization of a smooth
  connected linearly normal curve, it follows that a surface containing $X$ of
  the least possible degree is integral and moreover that $X$ is smooth,
  connected and linearly normal. 
  We have \\[-2mm]
  \[ d+g+18 < 55 + \max \,\{ \, \lfloor {\frac{d ^{2} }{5}}\rfloor -g\, , \
  \lfloor {\frac{d ^{2} }{10}} \rfloor \, , \ -d+g-1+4+h^1(\sI_X(1)) \,\}\,
  . \] Suppose the maximum to the right is attained by $ 55 -d+g-1+4$. Then
  $d+g+18 < 55 -d+g+3$ which is absurd since we have assumed $d>25$. Similarly
  if the maximum is attained by $55+\lfloor d ^{2}/5 \rfloor -g$ we get $ d +
  g+18 < 55 + \lfloor d ^{2}/5 \rfloor-g$ or equivalently $ 2g \le 36 +
  \lfloor d ^{2}/5 \rfloor -d$ which contradicts the assumption $g >
  \frac{d^2}{10}- \frac d 2 + 18 $. Also $d+ g+18 < 55 + \lfloor d ^{2}/10
  \rfloor $ leads to a contradiction because $36+\frac{d^2}{10}-d \le
  \frac{d^2}{10}- \frac d 2 + 18 $ for $d \ge 36$ and $36+\frac{d^2}{10}-d \le
  G(d,6)$ for $26 \le d < 36$. For the latter inequality, we remark that we,
  for $26 \le d < 31$, have to compute $G(d,6)$ in the $B$-range (or ``extended
  $C$-range'') using \eqref{maxgen3} and \eqref{maxgen4}. Thus we have
  proved that $W$ is an irreducible component of $\HH(d,g)_{sc}$.

  Secondly we suppose $g > G(d,7)$ and $d \ge 37$. It follows from the
  previous paragraph that the assumptions 
  $s(X)=5$ and $g > \frac{d^2}{10}- \frac d 2 + 18 $ yield a contradiction.
  Hence we may assume $s(X)=6$. Now if we use Proposition~\ref{20} for
  $s:=s(X) = 6$, which requires $d >36$,  we get \\[-2mm]
  \[ d+g+19 \le 83 + \max \,\{ \, \lfloor {\frac{d ^{2} }{6}}\rfloor -g\, , \
  \lfloor {\frac{d ^{2} }{12}} \rfloor \, , \ -2d+g-1+h^0(\sO_X(2)) \,\}\,
  . \] To estimate $h^0(\sO_X(2))$ we use Clifford's theorem to see
  $h^0(\sO_X(2))-1 \le \max \,\{ \, 2d-g,d\,\}$. Since it is easy to see
  \[ \max \,\{ \,32+ \frac 1 2 \lfloor {\frac{d ^{2} }{6}}\rfloor -\frac d 2
  \, , \ 64+ \lfloor {\frac{d ^{2} }{12}} \rfloor -d\,\}\, \le \frac{d^2}{10}-
  \frac d 2 + 18 \] for $d \ge 40$, we have a contradiction except for $37 \le
  d \le 39$. For these exceptions in the extended $C$-range we compute
  $G(d,7)$ by using \eqref{maxgen3} and \eqref{maxgen4} and we get a
  contradiction since $g > G(d,7)$ for $37 \le d \le 39$. Again we can
  conclude that $W$ is an irreducible component of $\HH(d,g)_{sc}$.

  Thirdly we suppose $g > G(d,8)$ and $d \ge 50$. It follows from the previous
  paragraph that the assumptions 
  $s(X)=6$ (or 5) and $g > \frac{d^2}{10}- \frac d 2 + 18 $ yield a
  contradiction. We may therefore assume $s(X)=7$. Now using
  Proposition~\ref{20} for
  $s:=s(X) = 7$, which requires $d >49$,  we get \\[-2mm]
  \[ d+g+19 \le 119 + \max \,\{ \, \lfloor {\frac{d ^{2} }{7}}\rfloor -g\, , \
  \lfloor {\frac{d ^{2} }{14}} \rfloor \, , \ -3d+g-1+h^0(\sO_X(3)) \,\}\,
  . \] We use Clifford's theorem to get $h^0(\sO_X(3))-1 \le \max \,\{ \,
  3d-g,3d/2 \,\}$. Since it is rather easy to see
  \[ \max \,\{ \,50+ \frac 1 2 \lfloor {\frac{d ^{2} }{7}}\rfloor -\frac d 2
  \, , \ 100+ \lfloor {\frac{d ^{2} }{14}} \rfloor -d\,\}\, \le
  \frac{d^2}{10}- \frac d 2 + 18 \] for $d \ge 46$, we have a contradiction,
  i.e. $W$ is an irreducible component of $\HH(d,g)_{sc}$.

  Since $G(d,8) \le \frac{d ^{2} }{10} - {\frac{d}{2}} + 18 $ for $d \ge 58$
  and $d \in \{ \, 54,55,56\}\,$, then $W$ is an irreducible component (again
  we need to work in the extended $C$-range). 

  Finally for $d=57$, $g= G(d,8)=315$ we check $\dim W \ge \dim V$
  directly. Indeed the general curve $X$ of $V$ is by \eqref{maxgen2} directly
  linked via a c.i.\;of type $(8,8)$ to a plane curve $X'$ of degree $r=7$
  since $d+r \equiv 0$ mod $8$. The mapping cone construction yields a minimal
  resolution $0 \rightarrow R(-15) \oplus R(-9) \rightarrow R(-8)^3 \to I(X)
  \rightarrow 0$,
    and since well known linkage formulas imply $h^1(\sO_X(8)) =
    h^0(\sI_{X'}(4))=20$ and $h^1(\sO_X(9)) = 10$, we get $\dim V= 4d +
    3h^1(\sO_X(8))-h^1(\sO_X(9))=285$ by \eqref{maxrank} while $\dim
    W=d+g+18=390$, a contradiction. Thus \eqref{range4.8} imply that $W$ is an
    irreducible component of $\HH(d,g)_{sc}$.

    Then using $ \dim W + h^1(\sI_C(3)) = h^0(\sN_C)$, cf. \eqref{alphaN}, we
    get the final statement and we are done.
  \end{proof}


What still lacks in this picture are existence results of components in the
range of the (proven part of the) conjecture. 
It is, however, not so
difficult to prove existence. One may, as Dolcetti and Pareschi do in
\cite{DP}, prove existence using Rathmann's work \cite{Ra}. 
By \cite{Ra} one knows that there
exists a smooth connected curve contained in a smooth del Pezzo surface in
$\PP^4$, of degree $d > 4$ and genus $g$ provided
\begin{equation} \label{Rat} 
 (d+12)\sqrt{d+9} -11d/2-35 \le g \le 1+ (d^2-4d)/8 \ .
\end{equation}
Indeed Rathmann shows the existence of a 
$6$-tuple $(\delta,m_1,..,m_5)$ satisfying $\sO_S(C)^2>0$ and $\delta \geq m_1
\geq.. \geq m_5 \ge 0 \ \ and \ \ \delta \geq m_1 + m_2
+m_3$ 
for every $(d,g)$ in the range \eqref{Rat}. Letting $m_6=0$ this leads to a
7-tuple $(\delta,m_1,..,m_5,m_6)$ satisfying \eqref{numpic} for every $(d,g)$
given as in Lemma~\ref{vani} (i) in the range \eqref{Rat}, whence we get the
existence of a smooth connected curve $C$ sitting on our smooth cubic surface
$S \subset \PP^3$. If we now add a $7$-tuple corresponding to $n$ hyperplanes,
$(3n,n,.,n)$, to $(\delta,m_1,..,m_5,0)$, the corresponding linear system will
contain smooth connected curves $X \in |C+nH|$ satisfying $m_6=n$ and
$H^1(I_X(v)) = 0$ for every non-negative integer $v \le n$. Using the
adjunction formula for the genus we easily see that the degree $d'$ and genus
$g'$ of $X$ satisfy
\[ d=d'-3n \ \ {\rm and} \ \ g = -nd'+g'+3(n^2+n)/2 \ .
\]
Inserting these formulas into \eqref{Rat} we get for every $d' > 3n+4$, $g'$ in
the range
\begin{equation} \label{RatX} (d'+12-3n)\sqrt{d'+9-3n} +d'(n-\frac {11} 2)-35+
  \frac {3(10n-n^2)}2 \le g' \le 1+ \frac {d'^2+(2n-4)d'-3n^2}8 \
\end{equation}
the existence of such a curve $X$. Using this for $n=1$ and $n=2$ we get the
following result.
\begin{theorem} \label{exis} For every $d \ge 14$ and $g$, $(d,g) \ne (14,22)$
  in the range
  \begin{equation} \label{RatX2} (d+9)\sqrt{d+6} -\frac {9d} 2-\frac {43} 2
    \le g \le \frac {d^2-4}8 \
\end{equation}
there exists a smooth connected curve $C$ of degree $d$ and genus $g$,
contained in a smooth cubic surface in $\PP^3$, whose corresponding $7$-tuple
$(\delta,m_1,..,m_6)$ has $m_6$ equal to 1 or 2 and such that
$(\delta,m_1,..,m_6) \ne (\lambda+ 3m_6,\lambda+ m_6,m_6,..,m_6)$ for every
$\lambda \ge 2$. If also $g \geq 3d-18$ holds, then $C$ satisfies
\[ H^1(\sI_C(3)) \neq 0 \ {\rm and} \ H^1(\sI_{C}(1))= 0\,. 
\] 
In particular for every $d, g \le \frac {d^2-4}8$ satisfying either
\eqref{Klegen}, \eqref{range4.7}, \eqref{range4.8} or $g>G(d,5), d\ge 21$
there exists a non-reduced component $W$ of \ $ \HH(d,g)_{sc}$ whose general
curve sits on a smooth cubic hypersurface. 
\end{theorem}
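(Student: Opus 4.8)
The plan is to run the construction preceding the theorem separately for the two values $n=1$ and $n=2$, and then to verify that the two resulting families together exhaust the range \eqref{RatX2}. Taking $n=1$ in \eqref{RatX} produces, for $(d,g)$ in the corresponding subrange, a smooth connected curve on a smooth cubic with $m_6=1$; the lower bound of this subrange coincides with the lower bound of \eqref{RatX2}, while its upper bound is $(d^2-2d+5)/8$. Taking $n=2$ produces curves with $m_6=2$ whose genus reaches the upper bound $(d^2-4)/8$ of \eqref{RatX2}. I would first record that the hypothesis $d>3n+4$ required in \eqref{RatX} holds throughout, since $d\ge 14$, so both constructions are available, and that each $(d,g)$ in the target range corresponds via the substitution $d=d'-3n$, $g=-nd'+g'+3(n^2+n)/2$ to a del Pezzo pair lying in Rathmann's range \eqref{Rat}.

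The next step is the range bookkeeping: the $n=1$ family covers genera up to $(d^2-2d+5)/8$, and the $n=2$ family covers genera from $(d+6)\sqrt{d+3}-7d/2-11$ upward, so it suffices to show that for $d\ge 15$ the $n=2$ lower bound is at most one unit above the $n=1$ upper bound, i.e. the elementary inequality $(d+6)\sqrt{d+3}-7d/2-11\le (d^2-2d+5)/8+1$, so that no integer value of $g$ is skipped. For $d=14$ the two subranges fail to meet by exactly one unit: the $n=1$ construction reaches $g=21$ and the $n=2$ construction begins at $g=23$, leaving precisely $g=22$ unrealized. This accounts for the single excluded pair $(14,22)$, which I would confirm by direct inspection.

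For the non-degeneracy clause I would observe that a $7$-tuple of the forbidden form $(\lambda+3m_6,\lambda+m_6,m_6,\dots,m_6)$ can only arise by adding the $m_6$ hyperplanes $(3m_6,m_6,\dots,m_6)$ to the del Pezzo $6$-tuple $(\lambda,\lambda,0,0,0,0)$. On the del Pezzo quartic in $\PP^4$ this class is $\lambda(\ell-E_1)$, whose self-intersection is $\lambda^2-\lambda^2=0$. Since Rathmann's tuples are constrained by $C^2>0$, the degenerate $6$-tuple never occurs, and hence the curve produced by the construction is automatically of non-forbidden shape; no extra case analysis is needed here.

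Finally, because every constructed curve has $m_6\in\{1,2\}$, Lemma~\ref{vani}(iii) yields $H^1(\sI_C(3))\ne 0$ and $H^1(\sI_C(1))=0$ as soon as $g\ge 3d-18$. For the concluding assertion I would check that each of the ranges \eqref{Klegen}, \eqref{range4.7}, \eqref{range4.8}, and $\{g>G(d,5),\ d\ge 21\}$, once intersected with $g\le (d^2-4)/8$, lies inside \eqref{RatX2} and forces $g\ge 3d-18$; the decisive comparison is that the quadratic $d^2/10-d/2+18$ dominates the order-$d^{3/2}$ lower bound of \eqref{RatX2} for the relevant degrees. The existence part then furnishes a linearly normal curve $C$ on a smooth cubic with $H^1(\sI_C(3))\ne 0$, and its $3$-maximal family is a non-reduced component by \cite{K1} in range \eqref{Klegen}, by Theorem~\ref{4.7} in ranges \eqref{range4.7} and \eqref{range4.8}, and by Ellia's theorem \cite{E} when $g>G(d,5)$. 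The main obstacle I anticipate is exactly this range arithmetic: isolating the single gap $(14,22)$ and embedding the four application ranges into \eqref{RatX2} both require comparing quadratic expressions in $g$ against $\sqrt{d}$-terms and treating the low-degree boundary cases that fall in the extended $C$-range, whereas the two conceptual inputs---non-degeneracy from $C^2>0$ and the cohomology from Lemma~\ref{vani}(iii)---are immediate.
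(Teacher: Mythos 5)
Your proposal follows essentially the same route as the paper's proof: specialize \eqref{RatX} to $n=1$ and $n=2$ to get the two subranges $[g1,(d^2-2d+5)/8]$ and $[(d+6)\sqrt{d+3}-7d/2-11,(d^2-4)/8]$, check they overlap (the paper shows $g2<G1$ for $d\ge 17$ and inspects $d=14,15,16$ to isolate $(14,22)$), invoke Lemma~\ref{vani}(iii) for the cohomology, and feed the four quoted ranges into the proven cases of the conjecture; your extra observation that the forbidden tuple would force a del Pezzo class with $C^2=0$, excluded by Rathmann's hypothesis, correctly fills in a detail the paper leaves implicit. One small caution: your gluing criterion ``$n=2$ lower bound at most one unit above the $n=1$ upper bound'' does not by itself preclude a skipped integer (one needs $\lceil g2\rceil\le\lfloor G1\rfloor+1$, not merely $g2\le G1+1$), so the cases $d=15,16$ should be settled by direct evaluation as the paper does, after which the conclusion stands.
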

\begin{proof} Let \ $G1:= ${\Large$\frac {d^2-2d +5}{8}$},
  $G2:= ${\Large$\frac {d^2-4}8$}, and let
 $$ g1:=(d+9)\sqrt{d+6} -\frac {9d} 2-\frac {43} 2 \ \ , \ \ 
 g2:= (d+6)\sqrt{d+3} -\frac {7d} 2-11. $$ Using \eqref{RatX} with $n=1$
  (resp. $n=2$) we get the existence of a smooth connected curve $C$ for every
  $(d,g)$ in the range $g1 \le g \le G1$ (resp. $g2 \le g \le G2$) with a
  7-tuple with the desired properties. One verifies that $g1 \le g2 \le G2$
  for $d\ge 14$ and that $g2 < G1 <G2$ for $d \ge 17$. Since the only integer
  $k$ satisfying $G1 < k< g2$ for $14 \le d \le 16$ is $k=22$ in which case
  $d=14$, we get the first conclusion of the theorem. Moreover invoking
  Lemma~\ref{vani} (iii) we get the next conclusion. Finally looking at the
  statements accompanying \eqref{Klegen}, \eqref{range4.7}, \eqref{range4.8}
  or $g>G(d,5), d\ge 21$, we see that the conjecture holds in these ranges,
  and we get the final conclusion of the theorem.
\end{proof}
\begin{remark} {\rm (i)} \ For $d \ge 17$ we have $g1 \le g2 \le G1 \le G2$,
  with notations as in the proof. If $d, g$ satisfy $g2 \le g \le G1$ it
  follows from the proof that there exists two 7-tuples, one with $m_6=1$ and
  one with $m_6=2$ with $d, g$ satisfying \eqref{RatX2}. If we in addition are
  in a range of the proven part of the conjecture, we have two different
  3-maximal families that form non-reduced components of
  $\HH(d,g)_{sc}$. 
  {\rm (ii)} \ If we use \eqref{RatX} for $n=3$ we get the existence of a
  smooth connected curve $C$ for every $d \ge 14$ and $g$ in the range
  \begin{equation} (d+3)\sqrt{d} -\frac {5d} 2-\frac {7} 2 \le g \le \frac
    {d^2+2d-19}8 \ , 
  \end{equation}
  with $m_6=3$ and $(\delta, m_1,..,m_6) \neq (\lambda+9,\lambda+3,3,..3)$ for
  any $\lambda \geq 2$. Such $C$ belongs to a generically smooth, irreducible
  component of \ $\HH(d,g)_{sc}$ because $H^1(\sI_C(3)) = 0$. This improves upon
  the lower bound of \cite[Prop.\;3.1]{K4} for $d \gg 0$. Indeed
  \cite[Prop.\;3.1]{K4}, which states that for $d >9$ and $(d,g)$ satisfying
  \begin{equation} \label{GruPes} 3d-17+ \frac{(d-9)(d-18)}{18} \le g \le 1 +
    \frac {d(d-3)}{6} \ , \ \ \ and
  \end{equation}
  $(d,g) \notin \{(30,91), (33,103), (34,109) \}$ there exists an unobstructed
  curve $C$ of \ $\HH(d,g)_{sc}$ contained in a smooth cubic surface, gives a
  better lower bound for $9 < d \le 161$. The proof of \cite[Prop.\;3.1]{K4}
  which makes use of Gruson and Peskine's existence result
  \cite[Prop.\,2.10]{GP2}, produces a curve $C$ with 7-tuple satisfying $m_6
  \ge 3$ and such that $H^1(\sI_C(v)) = 0$ for $v \le 3$ for every $(d,g)$ in
  the mentioned range, thus $C$ belongs a 3-maximal family $W$ which is a
  generically smooth component of $\HH(d,g)_{sc}$. 
   \end{remark}

\bigskip \bigskip

\end{document}